\newcounter{theorem}
\newtheorem{thm}[theorem]{Theorem}
\newtheorem{lemma}[theorem]{Lemma}
\newtheorem{prop}[theorem]{Proposition}
\newtheorem{cor}[theorem]{Corollary}
\newtheorem{defn}[theorem]{Definition}
\theoremstyle{remark}
\newtheorem*{remark*}{Remark}
\newtheorem{remark}[theorem]{Remark}
\newtheorem{question}[theorem]{Question}
\theoremstyle{theorem}
\numberwithin{equation}{section}
\numberwithin{theorem}{section}
\DeclareMathOperator\spa{span}
\DeclareMathOperator\Mon{Mon}
\DeclareMathOperator\rank{rank}
\DeclareMathOperator\Tr{Tr}
\newcommand{\ndiv}{\hspace{-4pt}\not|\hspace{2pt}}
\newcommand{\defemph}{\emph}
\newcommand{\R}{\mathbb{R}}
\newcommand{\Q}{\mathbb{Q}}
\newcommand{\Z}{\mathbb{Z}}
\newcommand{\C}{\mathbb{C}}
\newcommand{\N}{\mathbb{N}}
\renewcommand{\setminus}{\backslash}
\title[Simple stationary limits]{Simple dimension groups that are isomorphic to stationary inductive limits}
\author{Gregory R. Maloney}
\address{Newcastle University}
\subjclass[2010]{Primary: 06F20, %Ordered abelian groups, Riesz groups, ordered linear spaces
20K15 %Torsion-free groups, finite rank
Secondary: 19K14%$K_0$ as an ordered group, traces
}
\keywords{Torsion-free abelian group, dimension group, stationary}
\date{\today}
\begin{document}

\begin{abstract}
A dimension group is an ordered abelian group that is an inductive limit of a sequence of simplicial groups, and a stationary dimension group is such an inductive limit in which the homomorphism is the same at every stage.  
If a simple dimension group is stationary then up to scalar multiplication it admits a unique trace (positive real-valued homomorphism), but the short exact sequence associated to this trace need not split.  
In an earlier paper, Handelman described these ordered groups concretely in the case when the trace has trivial kernel---i.e., the group is totally ordered---and in the case when the group is free.  
The main result here is a concrete description of how a stationary simple dimension group is built from the kernel and image of its trace.  
Specifically, every stationary simple dimension group contains the direct sum of the kernel of its trace with a copy of the image, and is generated by that direct sum and finitely many extra elements.  
Moreover, any ordered abelian group of this description is stationary.  

The following interesting fact is proved along the way to the main result: given any positive integer $m$ and any square integer matrix $B$, there are two distinct integer powers of $B$, the difference of which has all entries divisible by $m$.  
\end{abstract}

\maketitle

\section{Introduction}\label{SEC:intro}

\begin{defn}\label{DEF:dimension-group}
An ordered Abelian group is called a \defemph{dimension group} if it is isomorphic to the inductive limit of a sequence of simplicial groups (direct sums of finitely many copies of $\Z$) in the category of ordered Abelian groups.  
\end{defn}

The order structure of an ordered group $G$ is determined by its positive cone $G^+ := \{ g\in G : g \geq 0\}$.  
Let us assume that all ordered groups are \defemph{directed}, meaning that $G = G^+ - G^+$.  

\begin{defn}\label{DEF:stationary}
A \defemph{stationary} inductive sequence is a sequence of the form 
\centerline{
\xymatrix{
\Z^k \ar@{>}^{A}[r] & \Z^k \ar@{>}^{A}[r] & \Z^k \ar@{>}^{A}[r] & \Z^k \ar@{>}^{A}[r] & \Z^k \ar@{>}^{A}[r] & \cdots \\
}
}
in which the homomorphism $A:\Z^k\to \Z^k$ is the same at each stage, and an ordered abelian group is called \defemph{stationary} if it is isomorphic to the inductive limit of a stationary sequence with a positive homomorphism $A$.  
\end{defn}

Every stationary group is a dimension group; the question considered here is that of describing the simple dimension groups that are stationary.  
This is essentially a question of finding the range of the invariant for certain classes of topological and dynamical objects, most notably the shifts of finite type (also called topological Markov chains), which are fundamental objects of study in the theory of dynamical systems.  
Every subshift of finite type has an associated stationary dimension group, which is an invariant of the subshift \cite{K:dimension-group}; simplicity of this dimension group is equivalent to the subshift being mixing.  
Stationary dimension groups also arise as cohomological invariants of substitution tiling spaces; more will be said about this in Section \ref{SEC:tiling-cohomology}.  

The question of how to describe simple stationary dimension groups has already been answered in \cite{H:irrational} in the free case and in the non-free totally ordered case.  
But in the case of a dimension group that is neither free nor totally ordered, the short exact sequence associated to the \defemph{trace} (order-preserving real-valued functional, normalized on any fixed positive element) need not split.  
The kernel of the trace is a finite-rank torsion-free abelian group---hence a subgroup of $\Q^r$---and the image of the trace is a simple totally ordered abelian group---hence a subgroup of $\R$---but the question remains of how these groups are combined, assuming they are stationary in the unordered and ordered sense respectively, to produce a stationary ordered group.  
This is the question that is answered in the following theorem, which is the main result.  

\newtheorem*{THM:description}{Theorem \ref{THM:description}}
\begin{THM:description}
Let $G$ be a non-cyclic simple dimension group.  
Then $G$ is stationary if and only if it is order isomorphic to a subgroup of $\R \oplus \Q^{r}$ ordered by the first coordinate and generated by the following:  
\begin{enumerate}
\item  a non-cyclic stationary order subgroup $H\subset \R\oplus 0^r$;  
\item  a rank-$r$ subgroup $K \subset \{ (0,q_1,\ldots,q_r) \in\R\oplus \Q^{r} \}$ that is stationary in the category of unordered torsion-free abelian groups; and
\item  a finite set $\{ z_1,\ldots, z_s\}\subset (H + K)\otimes \Q\subset \R \oplus \Q^r$.  
\end{enumerate}
Moreover, the number $s$ of extra generators can be taken to be less than or equal to the minimum of the ranks of $H$ and $K$.  
\end{THM:description}

The difficult part of this theorem is proving that any such group can be realized as a stationary limit with a positive integer matrix.  
Section \ref{SEC:example} contains a worked example in which a matrix is found that realizes the stationary property for a particular dimension group $G$.  

\section{Notation and definitions}\label{SEC:notation}

Let us use the term \defemph{rank} to refer to the torsion-free rank of a torsion-free abelian group $G$, that is, the maximum size of a $\Z$-independent subset of $G$ (alternatively, the dimension of the vector space $G\otimes \Q$).  
Every finite-rank torsion-free abelian group is isomorphic to a subgroup of $\Q^r$.  

Certain order subgroups of $\R^r$ (in fact, $\R \oplus \Q^{r-1}$) are of particular interest in this work.  
Let us say that the ordered group $\R^r$ with positive cone $\{ (x_1,\ldots,x_r) : x_1>0\}\cup \{ 0\}$ is \defemph{ordered by the first coordinate}, and likewise for any order-subgroup $G\subset\R^r$ with positive cone $G^+ = G\cap \{ (x_1,\ldots,x_r) : x_1>0\}\cup \{ 0\}$.  
Not all such order subgroups $G$ are dimension groups; using the famous result \cite[Theorem 2.2]{EHS} that the class of dimension groups coincides exactly with the class of countable torsion-free ordered abelian groups with the Riesz interpolation property, one can check that such a $G$ is a dimension group if and only if it is countable and either it is cyclic with trivial projections on all but the first coordinate, or its projection on the first coordinate is a dense subgroup of $\R$.  

A \defemph{trace} on an ordered abelian group $G$ is a positive group homomorphism $\tau : G \to \R$.  
Up to positive scalar multiples, the only trace on $G\subset \R^r$ ordered by the first coordinate is projection on the first coordinate.  

A dimension group $G$ is \defemph{simple} if, for all $g,h\in G^+$, there exists $n\in\N$ such that $0\leq h \leq ng$.  
It is easy to verify that, if $\tau$ is a trace on a simple dimension group $G$, then $\tau$ must take strictly positive values on $G^+$.  

Every inductive sequence of (ordered or unordered) torsion-free abelian groups has a limit, which has the following standard construction (see also \cite[Exercise 7.6.8]{DF:abstract-algebra}).  
Let 

\centerline{
\xymatrix{
G_1 \ar@{>}^{A_1}[r] & G_2 \ar@{>}^{A_2}[r] & G_3 \ar@{>}^{A_3}[r] & G_4 \ar@{>}^{A_4}[r] & \cdots \\
}
}
be an inductive sequence of torsion-free abelian groups.  
Then the inductive limit of this sequence is isomorphic as a set to the quotient $\{ (g,i) : i\in \N,g\in G_i\}/{\sim}$, where $(g_1,i_1)\sim (g_2,i_2)$ if there exists $j>i_1,i_2$ such that $A_{j-1}\cdots A_{i_1+1}A_{i_1} g_1 = A_{j-1}\cdots A_{i_2+1}A_{i_2}g_2$.  
Let us denote the equivalence class of $(g,i)$ under $\sim$ by $[g,i]$.  
Then the group operation is defined on elements $[g,i], [h,j]$ with $i\geq j$ by 
\begin{align*}
[g,i] + [h,j] & = [g + A_{i-1}\cdots A_jh,i].
\end{align*}
If the groups $G_i$ are all ordered groups and the homomorphisms $A_i$ are all positive (meaning that $A_i(G_i^+)\subset G_{i+1}^+$) then the inductive limit is also an ordered group with positive cone equal to $\{ [g,i] : A_{j-1}\cdots A_i g\in G_j^+$ for some $j\geq i\}$.  

Dimension groups are limits of inductive sequences in which the groups $G_i$ are simplicial groups, i.e., $\Z^k$, ordered by the positive cone $(\Z^k)^+$ consisting of all columns with non-negative entries.  
A homomorphism $A$ between two such groups $\Z^k$ and $\Z^l$ can be represented as an $l\times k$ integer matrix, the columns of which express the image under $A$ of the standard basis elements of $\Z^k$ as combinations of the standard basis elements of $\Z^l$.  
Let us also use the symbol $A$ to denote this matrix.  
If $A$ is a positive group homomorphism, then $A$ is a matrix of non-negative integers.  

So in a sense there is already an answer to the question of what a simple stationary dimension group looks like: it looks like a set of equivalence classes of pairs of indices and groups elements.  
But this is not a useful description; it would be much better to be able to describe such a group concretely, that is, as a subgroup of a real vector space with an appropriate order structure---in this case, ordering by the first coordinate.  
That is what is done here in Theorem \ref{THM:description}.  

Suppose that an ordered group $G$ is isomorphic to the inductive limit of a stationary system:

\centerline{
\xymatrix{
\Z^k \ar@{>}^{A}[r] & \Z^k \ar@{>}^{A}[r] & \Z^k \ar@{>}^{A}[r] & \Z^k \ar@{>}^{A}[r] & \Z^k \ar@{>}^{A}[r] & \cdots. \\
}
}
Then the statement that $G$ is simple is equivalent to the statement that $A$ is \defemph{primitive}, that is, there exists some positive power of $A$, all entries of which are strictly positive.  
The Perron--Frobenius theory then implies that $A$ has a unique eigenvalue $\lambda$ of multiplicity one with maximal modulus and a left $\lambda$-eigenvector $w$ with strictly positive real entries.  
The vector $w$ is a row, so acts on $\Z^k$ by multiplication; this yields a compatible system of positive homomorphisms to $\R$:

\centerline{
\xymatrix{
\Z^k \ar@{>}^{A}[r] \ar@{>}_{w}[dr] & \Z^k \ar@{>}^{A}[r] \ar@{>}^(0.3){w\lambda^{-1}}[d] & \Z^k \ar@{>}^{A}[r] \ar@{>}^{w\lambda^{-2}}[dl] & \cdots \\
& \R & & 
}
}
By the universal property of the inductive limit, this system induces a trace $\tau:G\to \R$.  
It is not difficult to verify that all traces on $G$ are obtained in this way by taking positive multiples of $w$.

\section{An intrinsic characterization of stationarity in the simple case}\label{SEC:intrinsic}

The main result of this section is a proof that stationarity of a simple dimension group $G$ is equivalent to the conditions that $G$ have a unique normalized trace and that there exist a finitely-generated sub-monoid of $G^+$, the union of the images of which under a particular automorphism exhausts all of $G^+$.  
Let us summarize the second of these conditions in a formal definition.  
For this let us introduce the following notation: given a subset $S$ of a group $G$, let $\Mon (S)$ denote the monoid generated by $S$.  

\begin{defn}\label{DEF:monoid-condition}
Let $G$ be a dimension group with positive cone $G^+$, let $\alpha:G\to G$ be an order automorphism, and let $S\subset G^+$ be a subset.  
Let us say that the pair $(\alpha,S)$ satisfy the \defemph{increasing monoid condition} if $G^+$ is the increasing union of the monoids $\Mon (\alpha^n(S))$.  
\end{defn}

Proposition \ref{PROP:characterization}, below, characterizes stationary simple dimension groups using Definition \ref{DEF:monoid-condition}.  

\begin{prop}\label{PROP:characterization}
Let $G$ be a simple dimension group.  
Then $G$ is stationary if and only if it has a unique trace (up to multiplication by a positive scalar) and an order automorphism $\alpha$ and finite subset $F\subset G^+$ that satisfy the increasing monoid condition.  
\end{prop}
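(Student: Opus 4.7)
The plan is to handle the two directions of the biconditional separately. For the forward direction, assume $G \cong \varinjlim(\Z^k \xrightarrow{A} \Z^k \xrightarrow{A} \cdots)$ with $A$ primitive (equivalent to simplicity). Perron--Frobenius gives a unique positive left eigenvector of $A$ up to scale, yielding via the compatible system at the end of Section~\ref{SEC:notation} a trace on $G$ that is unique up to positive scalar. The shift $\alpha \colon [v,n] \mapsto [v,n+1]$ is an order automorphism, and $F := \{[e_1, 1], \ldots, [e_k, 1]\}$ satisfies the increasing monoid condition because $\Mon(\alpha^n(F)) = \{[v, n+1] : v \in (\Z^k)^+\}$ is increasing in $n$ (by non-negativity of $A$) and every $[v, m] \in G^+$ has $A^p v \in (\Z^k)^+$ for some $p$.

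For the converse, suppose $(\alpha, F = \{f_1, \ldots, f_k\})$ satisfies the condition and $G$ has a unique trace. The inclusion $\Mon(F) \subseteq \Mon(\alpha(F))$ lets us write $f_j = \sum_i a_{ij} \alpha(f_i)$ with $a_{ij} \in \Z_{\geq 0}$; set $A := (a_{ij})$. Defining $\phi_n \colon \Z^k \to G$ by $\phi_n(e_j) := \alpha^{n-1}(f_j)$, the relation $f_j = \sum_i a_{ij}\alpha(f_i)$ is exactly $\phi_{n+1} \circ A = \phi_n$, so by the universal property we obtain a positive homomorphism $\Psi \colon \varinjlim(\Z^k, A) \to G$. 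Order-preservation and order-reflection are direct, since $[v,n] \geq 0$ in the limit iff $A^p v \in (\Z^k)^+$ for some $p$, iff $\Psi[v,n] = \phi_{n+p}(A^p v) \in \Mon(\alpha^{n+p-1}(F)) \subseteq G^+$. Surjectivity follows from $G^+ \subseteq \bigcup_n \Mon(\alpha^n(F))$ being contained in the image of $\Psi$, together with $G = G^+ - G^+$.

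The main obstacle is injectivity. The kernel of each $\phi_n$ is the relation subgroup $L := \{v \in \Z^k : \sum_j v_j f_j = 0\}$, while $[v, n] = 0$ in the limit iff $v \in L_\infty := \bigcup_p \ker A^p$; the inclusion $L_\infty \subseteq L$ is immediate from $\phi \circ A = \alpha^{-1} \circ \phi$, but the reverse depends on the choice of the $a_{ij}$: each column $A e_j$ can be modified by any element of $L$ without changing its $\phi$-image, and a naive choice can leave $\ker A$ strictly smaller than $L$. I would exploit that $L$ is pure in $\Z^k$ (since $\Z^k/L$ embeds in the torsion-free group $G$), split $\Z^k = L \oplus M$, and choose the columns $A e_j$ so that they lie in $M$; this forces $A|_L = 0$, whence $L \subseteq \ker A \subseteq L_\infty \subseteq L$ gives $L = L_\infty$ and injectivity follows. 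Arranging this choice to be simultaneously non-negative is the delicate step; the increasing monoid condition guarantees that each $\alpha^{-1}(f_j)$ admits at least one non-negative lift through $\phi$, and I expect the uniqueness of the trace to rigidify the Perron structure of $A$ enough to permit a section-compatible non-negative choice of lifts.
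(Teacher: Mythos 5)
Your forward direction and your setup of the converse match the paper's. Your diagnosis of the injectivity obstruction is also correct, and your fix is the same as the paper's: $\ker\phi$ is pure, so $\Z^k=\ker\phi\oplus M$ splits, and replacing each column of $A$ by its projection into $M$ (the paper's $A'=A-LD_lL^{-1}A$) yields a transition matrix annihilating $\ker\phi$, whence $\ker A'=\ker\phi$ and the induced map is injective by Handelman's lemma.

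The genuine gap is the step you defer to at the end. The two constraints --- columns in $M$ (for injectivity) and non-negative entries (so that $A$ is a positive homomorphism and the limit carries the simplicial order) --- are in general \emph{not} simultaneously satisfiable, and the paper does not attempt a ``non-negative choice of lifts.'' Instead it accepts a matrix $A'$ with negative entries and proves that a further modification has a strictly positive power, via Lemma \ref{LEM:eventually-positive}. This requires two substantial pieces of work that your proposal does not contain: (i) Lemma \ref{LEM:PF}, which uses the unique trace \emph{and} the increasing monoid condition to show that $\alpha^{-1}$ has a weak Perron--Frobenius eigenvalue $\lambda>1$ whose left eigenvector $\phi^*(\tau)$ is strictly positive (this is where trace uniqueness actually enters, and it is a nontrivial estimate, not a formal ``rigidification''); and (ii) the right eigenvector of $A'$ need not be positive even after the projection trick, so the paper passes to $A''=(A')^m+v_1w_1^t+\cdots+v_lw_l^t$ with $w_i\in(\ker\phi)^\perp$, and uses convexity of the set $S$ of admissible perturbations together with the growth of $\lambda^m$ to land an integer lattice point in $S$, producing a strictly positive right eigenvector while preserving $A''(\ker\phi)=0$. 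Without (i) and (ii) the converse is not proved; your ``order-reflection is direct'' claim also quietly presupposes the positivity of the matrix and the injectivity you have not yet secured, since a priori $A^pv$ need not become non-negative merely because $\Psi[v,n]\in G^+$.
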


Both for the ``only if'' and the ``if'' parts of the proof use the following lemma, which is proved in \cite{H:irrational}, and one direction of which is the well-known Perron--Frobenius theorem.  

\begin{lemma}\cite[Lemma 2.1]{H:irrational}\label{LEM:eventually-positive}
Let $A$ be a square matrix with real entries.  
Then $A$ is primitive if and only if:
\begin{enumerate}
\item  $A$ has a real eigenvalue $\lambda$ of multiplicity one, such that for all other eigenvalues $\mu$ of $A$ in $\C$, $\lambda > |\mu|$ (such an eigenvalue is called a \defemph{weak Perron--Frobenius eigenvalue}); and 
\item  the left and right eigenvectors corresponding to $\lambda$ can be chosen with strictly positive entries.  
\end{enumerate}
\end{lemma}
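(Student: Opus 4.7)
The plan is to handle the two implications separately. For the ``only if'' direction, I would invoke the classical Perron--Frobenius theorem for strictly positive matrices: by definition of primitivity some power $A^n$ has all strictly positive entries, and the Perron--Frobenius theorem gives $A^n$ a simple, strictly dominant, positive eigenvalue $\mu$ with strictly positive left and right eigenvectors $w$ and $v$. I would then lift back to $A$: since eigenvalues of $A^n$ are $n$-th powers of eigenvalues of $A$, there is an eigenvalue $\lambda$ of $A$ with $\lambda^n = \mu$, and because $Av = \lambda v$ while $v$ has strictly positive entries, $\lambda$ must be real and positive. Any other eigenvalue $\mu'$ of $A$ satisfies $|\mu'|^n < \mu = \lambda^n$, so $|\mu'| < \lambda$; and the algebraic multiplicity of $\lambda$ in $A$ cannot exceed that of $\mu$ in $A^n$, forcing multiplicity one. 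The left eigenvector is handled symmetrically by considering $A^T$.

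For the ``if'' direction, which is the substantive one, the idea is to show that $\lambda^{-n}A^n$ converges to a rank-one matrix with strictly positive entries. I would normalize so that $wv = 1$ (possible because $v$ and $w$ both have strictly positive entries, so $wv > 0$) and then use the spectral decomposition $A = \lambda P + N$, where $P = vw$ is the spectral projection onto the $\lambda$-eigenspace along the sum of the remaining generalized eigenspaces and $N = A(I - P)$. Because $\lambda$ has algebraic multiplicity one, $P$ is rank one, $P$ and $N$ commute, and $PN = NP = 0$, so $A^n = \lambda^n P + N^n$. The spectral radius $\rho$ of $N$ is the second-largest modulus of an eigenvalue of $A$, and by hypothesis $\rho < \lambda$. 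Consequently $\lambda^{-n}N^n \to 0$ in operator norm, whence $\lambda^{-n}A^n \to P = vw$ entrywise. Every entry of the outer product $vw$ is strictly positive, so for all sufficiently large $n$ the matrix $A^n$ has strictly positive entries, which is the definition of primitivity.

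The step I expect to require the most care is the convergence $\lambda^{-n}N^n \to 0$ when $A$ is not diagonalizable on the complement of the $\lambda$-eigenspace: one needs the standard Jordan-form estimate $\|N^n\| \leq C n^{k-1}\rho^n$, where $k$ is the size of the largest Jordan block of $N$, and then observe that the polynomial factor $n^{k-1}$ is swamped by $(\rho/\lambda)^n$ since $\rho < \lambda$. Once this analytic point is dispensed with, the entire argument rests on the algebraic fact that the multiplicity-one hypothesis is exactly what makes the dominant spectral projection rank one and expressible as the outer product of the two Perron eigenvectors, which in turn is exactly what makes its limit entrywise positive.
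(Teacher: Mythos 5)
The paper does not actually prove this lemma itself --- it is quoted from Handelman \cite{H:irrational}, with the remark that one direction is the classical Perron--Frobenius theorem --- so there is no in-paper argument to compare against; I will judge your proposal on its own. Your ``if'' direction, which is indeed the one the paper relies on, is the standard and correct spectral-projection argument: after normalizing $wv=1$ the matrix $P=vw$ is the rank-one spectral projection, $A^n=\lambda^nP+N^n$ with $\rho(N)<\lambda$, so $\lambda^{-n}A^n\to vw$, which is entrywise strictly positive, and hence $A^n>0$ for all large $n$.

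The gap is in the ``only if'' direction, at the step ``because $Av=\lambda v$ while $v$ has strictly positive entries, $\lambda$ must be real and positive.'' Since $A$ is only assumed to have real entries (not non-negative ones), positivity of $v$ tells you that $Av=\lambda v$ is a real vector and hence that $\lambda$ is real, but it does not force $\lambda>0$: if $B$ is strictly positive and $A=-B$, then $A^2=B^2>0$, so $A$ has a strictly positive power, yet its dominant eigenvalue is $-\rho(B)<0$ and condition (1) fails. This example also shows that the lemma is literally false for general real matrices if ``primitive'' is read, as in Section \ref{SEC:notation} of the paper, as ``some power of $A$ is strictly positive''; Handelman's Lemma 2.1 concerns \emph{eventually positive} matrices, i.e., $A^n>0$ for all sufficiently large $n$. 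Under that (correct) reading your argument repairs easily: run it for two consecutive large exponents $n$ and $n+1$, obtaining $\lambda^n>0$ and $\lambda^{n+1}>0$ and hence $\lambda>0$, after which the strict dominance $|\mu'|<\lambda$ and the multiplicity-one claim follow exactly as you wrote them, and the left eigenvector is handled by passing to $A^T$ as you indicate.
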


Let us first give a proof of the necessity of the two conditions in Proposition \ref{PROP:characterization} before stating a lemma that will be used in the proof of their sufficiency.  

\begin{proof}[Proof of Proposition \ref{PROP:characterization}, ``only if'']
Suppose $G$ is isomorphic to a stationary inductive limit with positive homomorphism $A:\Z^k\to \Z^k$ at every stage.  
Then the identity homomorphism $I:\Z^k\to \Z^k$ produces a family of positive homomorphisms from stage $n$ to stage $n+1$ (and hence from stage $n$ to the limit $G$) that make the following diagram commute.  

\centerline{
\xymatrix{
\Z^k \ar@{>}^{A}[r] \ar@{>}^{I}[dr] & \Z^k \ar@{>}^{A}[r] \ar@{>}^{I}[dr] & \Z^k \ar@{>}^{A}[r] \ar@{>}^{I}[dr] & \Z^k \ar@{>}^{A}[r] \ar@{>}^{I}[dr] & %
\cdots \ar@{>}[r] & G \ar@{>}^{\alpha}[d] \\
\Z^k \ar@{>}^{A}[r] & \Z^k \ar@{>}^{A}[r] & \Z^k \ar@{>}^{A}[r] & \Z^k \ar@{>}^{A}[r] & \cdots \ar@{>}[r] & G 
}
}

The universal property of the inductive limit then yields the positive homomorphism $\alpha: G\to G$; $\alpha$ is easily seen to be an order automorphism, and the standard basis elements $\{[e_i,1]\}_{i=1}^k$ of $\Z^k$ from stage $1$ form a finite set of positive elements that satisfies the increasing monoid condition with $\alpha$.  

The homomorphism $A$ is given by left multiplication by a non-negative integer matrix; let us also denote this matrix by $A$.  
If $G$ is simple then some positive integer power $n$ of $A$ has strictly positive entries (so that $[e_i,1] \geq [e_j,n]$ for all $i,j\leq k$).  
Then by Lemma \ref{LEM:eventually-positive} $A$ has a weak Perron--Frobenius eigenvalue $\lambda$, and furthermore the left and right eigenvectors corresponding to this eigenvalue can be chosen with strictly positive entries.  

Let $w$ denote a positive left eigenvector, considered as a row.  
Then, as described in Section \ref{SEC:notation}, the homomorphism $[g,n] \mapsto \lambda^{1-n}wg$ is a trace that can be seen to be unique up to multiplication of $w$ by a positive scalar.  
\end{proof}

To prove the ``if'' part of Proposition \ref{PROP:characterization} involves finding an explicit order endomorphism of $\Z^k$ that realizes the stationary property.  
The following lemma, from \cite{H:almost-ultrasimplicial}, will be useful for this purpose.  

\begin{lemma}\cite[Lemma 1.1]{H:almost-ultrasimplicial}\label{LEM:increasing-semigroups}
Suppose that $G$ is an ordered abelian group with an increasing set of subsemigroups, $S_1\subset S_2\subset \cdots $ such that $G^+ = \bigcup S_n$, with each $S_n$ generated by $\{a_i^{(n)}\}_{i=1}^{k_n}$.  
Suppose that $A_n$ is a transition matrix associated to this choice of generators for $S_n\subset S_{n+1}$; i.e., $a_i^{(n)} = \sum_{j=1}^{k_{n+1}}(A_n)_{ji}a_j^{(n+1)}$.  
(Note the reversed indices $i$ and $j$.)  
Form the dimension group $H = \lim A_n : \Z^{k_n} \to \Z^{k_{n+1}}$.  
\begin{enumerate}
\item  There is a unique positive group homomorphism $\Phi : H\to G$ such that $[e_i^{(n)},n]\mapsto a_i^{(n)}$; moreover, $\Phi(H^+) = G^+$.  
\item  If $\Phi$ is one to one then it is an isomorphism of ordered abelian groups.  
%\item  If $G$ is torsion free such that $\rank H \leq \rank G$ and $\rank H < \infty$, then $\Phi$ is an ordered group isomorphism.  
\end{enumerate}
\end{lemma}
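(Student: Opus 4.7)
The plan is to build $\Phi$ via the universal property of the inductive limit. For each $n$, I would define a group homomorphism $\phi_n\colon \Z^{k_n}\to G$ by sending $e_i^{(n)}\mapsto a_i^{(n)}$; the hypothesized identity $a_i^{(n)} = \sum_j (A_n)_{ji}\,a_j^{(n+1)}$ is exactly the statement $\phi_{n+1}\circ A_n = \phi_n$, so the family is compatible with the inductive system, and each $\phi_n$ is positive because $S_n\subset G^+$ and positive vectors in $\Z^{k_n}$ are non-negative integer combinations of the standard basis. The universal property then delivers a positive homomorphism $\Phi\colon H\to G$ with $\Phi([e_i^{(n)},n]) = a_i^{(n)}$, and uniqueness is automatic since these classes generate $H$ as an abelian group.

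To establish $\Phi(H^+)=G^+$, one inclusion is the positivity of $\Phi$ itself. For the other direction, any $g\in G^+=\bigcup_n S_n$ lies in some $S_n$, so can be written as a non-negative integer combination of the generators $a_i^{(n)}$, and the corresponding non-negative combination of basis vectors in $\Z^{k_n}$ gives a preimage sitting in $H^+$.

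For part (2), assuming $\Phi$ is injective, surjectivity onto $G$ comes for free from directedness and part (1): $G=G^+-G^+=\Phi(H^+)-\Phi(H^+)\subset\Phi(H)$. The real content is showing $\Phi^{-1}(G^+)\subset H^+$. Here is how I would proceed: given $[g,n]$ with $\Phi([g,n])\in G^+$, choose $m\geq n$ large enough that $\Phi([g,n])\in S_m$ and write $\Phi([g,n])=\sum_j h_j a_j^{(m)}$ with $h_j$ non-negative integers. The composite transition relation $a_i^{(n)}=\sum_j (A_{m-1}\cdots A_n)_{ji}\,a_j^{(m)}$ then shows that $y:=(A_{m-1}\cdots A_n)g - h\in\Z^{k_m}$ satisfies $\Phi([y,m])=0$. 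Injectivity gives $[y,m]=0$ in $H$, which by the construction of the inductive limit means $A_{p-1}\cdots A_m\, y = 0$ for some $p\geq m$; equivalently $A_{p-1}\cdots A_n\, g = A_{p-1}\cdots A_m\, h$, whose right-hand side has non-negative entries since $h\geq 0$ and each $A_j$ is a non-negative matrix. This is precisely the condition for $[g,n]$ to lie in $H^+$.

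The main obstacle is this last step. Detecting that $(A_{m-1}\cdots A_n)g - h$ vanishes in $G$ is routine, but converting this into an actual non-negativity statement at a finite stage of the inductive system---which is what $H^+$-membership demands---uses injectivity in an essential way; without it, the class $[y,m]$ could be non-trivial in $H$, and the relation in $G$ would give no information about the sign pattern of any particular $A_{p-1}\cdots A_n g$.
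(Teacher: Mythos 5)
Your proof is correct; the paper does not reprove this lemma (it is quoted from Handelman's work), and your argument is the standard one: build $\Phi$ from the compatible maps $\phi_n\colon e_i^{(n)}\mapsto a_i^{(n)}$ via the universal property, get $\Phi(H^+)=G^+$ from $G^+=\bigcup S_n$, and for part (2) use injectivity to convert the vanishing of $[\,(A_{m-1}\cdots A_n)g-h,\;m\,]$ in $H$ into the entrywise inequality $A_{p-1}\cdots A_n g = A_{p-1}\cdots A_m h\geq 0$ at some finite stage $p$, which is exactly membership in $H^+$. You have also correctly identified that this last conversion is where injectivity is essential, and all the auxiliary facts you invoke (non-negativity of the $A_j$, directedness $G=G^+-G^+$) are available from the hypotheses and the paper's standing conventions.
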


\begin{proof}[Proof of Proposition \ref{PROP:characterization}, ``if'']
To prove the ``if'' direction, suppose that $F = \{ x_1,\ldots,x_k\} \subset G^+$ is a finite set satisfying the increasing monoid condition for an order automorphism $\alpha$ of $G$.  
We may suppose that $F$ does not contain $0$.  
By the ``increasing'' part of the increasing monoid condition, each element of $F$ can be written as a non-negative integer combination of elements of $\alpha(F)$.  
Let $A$ be a transition matrix representing these combinations; that is, $x_i = \sum_{j=1}^k (A)_{ji}\alpha(x_j)$ with $(A)_{ji}\in\Z^+$.  
The coefficients $(A)_{ji}$ are not unique if $k$ exceeds the rank of $G$, which is at most $k$ as $G = G^+-G^+$ implies that any finite independent subset is contained in $\langle \alpha^n(F)\rangle$ for some $n$.  

Left multiplication by the $k\times k$ matrix $A$ is a positive homomorphism $A : \Z^k\to\Z^k$ that sends the standard basis element $e_i$ to $\sum_{j=1}^k (A)_{ij}e_j$; let us also denote this homomorphism by $A$.  
Consider the stationary dimension group $H := \lim A : \Z^k\to\Z^k$.  
By Lemma \ref{LEM:increasing-semigroups} there is a positive homomorphism $\Phi : H\to G$ sending $[e_i,n] \mapsto \alpha^n(x_i)$, and if $\Phi$ is one to one then it is an order isomorphism.  
But $\Phi$ need not be one to one, so the remainder of the proof describes how to choose $A$ in such a way as to make $\Phi$ one to one.  
In particular, it will suffice to choose $A$ in such a way that the kernel of the homomorphism $\phi : e_i\mapsto x_i$ is also the kernel of $A$ ($\phi$ appears in Diagram \ref{DIAG:alpha-inverse}, below).  
This new choice of $A$ might have negative entries, so a further modification, using the unique trace property, will be required to ensure that its left and right eigenvectors associated to the weak Perron--Frobenius eigenvalue have strictly positive entries.  
Lemma \ref{LEM:eventually-positive} will then suffice to show that some power of $A$ is strictly positive, which is enough to prove the result.  

The transition matrix $A$ makes the following diagram commute.  

\begin{equation}
\xymatrix{
\Z^k \ar@{>}^{A}[r] \ar@{>}_{\phi}[d] & \Z^k \ar@{>}^{\phi}[d]  \\
G \ar@{>}_{\alpha^{-1}}[r] & G
}\label{DIAG:alpha-inverse}
\end{equation}

Any other transition matrix $A'$ for $\alpha$ differs from $A$ by an integer matrix, the columns of which lie in $\ker \phi$.  
$\ker\phi$ is a subgroup of $\Z^k$, and hence free abelian; moreover it is unperforated, meaning that $ng\in\ker\phi$ for $g\in\Z^k$ and $n\in\N$ implies that $g\in\ker\phi$.  
This implies that $\Z^k/\ker\phi$ is torsion-free, and hence is itself a free group, and so the exact sequence $0\to \ker\phi \to \Z^k\to \Z^k/\ker\phi \to 0$ splits, and $\Z^k\cong \ker\phi \oplus \Z^k/\ker\phi$.  
Then it is possible to extend a basis $\{ v_1,\ldots, v_l\}$ of $\ker\phi$ to a basis of $\Z^k$; let $L$ denote the integer matrix, the columns of which are the elements of this basis, starting with $\{ v_1,\ldots,v_l\}$.  
Because $L$ represents a basis for $\Z^k$, it is invertible over $\Z$.  

Let $D_l$ denote the $k\times k$ diagonal matrix, the first $l$ diagonal entries of which are $1$ and the last $k-l$ of which are $0$.  
Then the idempotent $LD_lL^{-1}$ leaves all elements of $\ker\phi$ fixed, and its column space is contained in $\ker\phi$.  
The commutativity of Diagram \ref{DIAG:alpha-inverse} implies that $A(\ker\phi)\subset \ker\phi$.  
So let $A' = A-LD_lL^{-1}A$; then the column space of $A-A'$ lies in $\ker\phi$, so $A'$ is again a transition matrix for $\alpha$, although possibly one with negative entries.  
Moreover, if $v\in\ker\phi$, then $A'v = Av - LD_lL^{-1}Av = Av-Av = 0$.  

The matrix $A'$ also acts linearly by left multiplication on the vector space $\R^k = \Z^k\otimes \R$; likewise $\alpha^{-1}$ induces an invertible linear operator (also denoted by $\alpha^{-1}$) on the vector space $G\otimes \R$, $\phi$ induces a linear map $\phi : \R^k\to G\otimes\R$, and the following diagram commutes.  

\begin{equation}
\xymatrix{
\R^k \ar@{>}^{A'}[r] \ar@{>}_{\phi}[d] & \R^k \ar@{>}^{\phi}[d]  \\
G\otimes\R \ar@{>}_{\alpha^{-1}}[r] & G\otimes\R
}\label{DIAG:real-tensor}
\end{equation}

Let $p$ denote the characteristic polynomial of the linear operator $\alpha^{-1}$ on $G\otimes \R$.  
$\alpha^{-1}$ is invertible, so the constant coefficient $\mu$ of $p$ is non-zero.  

Let $r$ denote the rank of $G$ and choose $r$ linearly independent elements of $G\otimes \R$.  
$p(\alpha^{-1})$ sends all of these elements to $0$.  
Pick one of these elements and express it as a real combination of $\{x_i\}_{i=1}^k$: $\sum_{i=1}^k c_ix_i$.  
Then $p(A')\sum_{i=1}^kc_ie_i = v$ for some $v\in\ker\phi$, so $p(A')$ $(\sum_{i=1}^kc_ie_i$ $- \frac{1}{\mu}v) = 0$.  
This yields $r$ linearly independent elements of $\R^k$ that go to $0$ under $p(A')$; along with the basis $\{ v_1,\ldots, v_l\}$ of $\ker\phi$ this gives a linearly independent set of size $r+l = k$.  
Since $\ker\phi$ is a $0$-eigenspace of $A'$, this means that the characteristic polynomial of $A'$ is $x^lp(x)$.  
Thus the eigenvalues of $A'$ are all the eigenvalues of $\alpha^{-1}$, with multiplicity, along with $0$, which has multiplicity $l = k-r$.  
The fact that $A$ is an integer matrix means that $p$ has integer coefficients, so in particular $|\mu|\geq 1$.  

The unique (up to multiplication by a positive real scalar) trace $\tau$ on $G$ can be extended to an element of $(G\otimes \R)^*$, the dual of the vector space $G\otimes\R$.  
Taking transposes in Diagram \ref{DIAG:real-tensor} yields the following commuting diagram.  

\begin{equation}
\xymatrix{
(\R^k)^*  & (\R^k)^* \ar@{>}^{A'^*}[l] \\
(G\otimes\R)^* \ar@{>}^{\phi^*}[u] & (G\otimes \R)^* \ar@{>}^{(\alpha^{-1})^*}[l] \ar@{>}_{\phi^*}[u] 
}\label{DIAG:dual}
\end{equation}

$\alpha^{-1}$ has a weak Perron--Frobenius eigenvalue $\lambda$ and $\tau$ is an eigenvector of $(\alpha^{-1})^*$ corresponding to that eigenvalue; this fact is proved below in Lemma \ref{LEM:PF}.  
Because $|\mu| = |\det (\alpha^{-1})| \geq 1$, it must be true that $\lambda > 1$.  
Thus $(\alpha^{-1})^*(\tau) = \lambda\tau$, and $\phi^*$ is injective (because $\phi$ is surjective), so $A'^*(\phi^*(\tau)) = \lambda \phi^*(\tau)$.  
Hence $\phi^*(\tau)$ is a $\lambda$-eigenvector of $A'^*$, which is the same as a left $\lambda$-eigenvector of $A'$.  

$\tau$ is a positive functional and each $x_i\in G^+$, so $\tau(x_i)\geq 0$, and because $G$ is simple, $\tau (x_i)$ is strictly positive, as mentioned in Section \ref{SEC:notation}.  
Then $\phi^*(\tau)$ is a row vector, the $i$th entry of which is $(\phi^*(\tau))(e_i) = \tau(x_i) > 0$.  

This row vector is a left $\lambda$-eigenvector of any integer matrix $A$ that makes Diagram \ref{DIAG:alpha-inverse} commute.  
Now it remains to show that $A'$ can be modified in such a way that it still satisfies $A'(\ker\phi) = \{ 0\}$ and it has a right $\lambda$-eigenvector with strictly positive entries.  

Let us replace $A'$ with $A'' = (A')^m + v_1w_1^t + \cdots + v_lw_l^t$, where each $w_i$ is an integer vector.  
The columns of the matrices $v_iw_i^t$ all lie in $\ker\phi$, so $A''$ again makes Diagram \ref{DIAG:alpha-inverse} commute (with $\alpha^{-1}$ replaced by $\alpha^{-m}$).  
Also, $\ker\phi$ has dimension $l$, so there are $r = k-l$ linearly independent integer row vectors $w$ satisfying $wv = 0$ for all $v\in\ker\phi$.  
Let $(\ker\phi)^\perp$ denote the set of all such row vectors and choose each $w_i^t$ from this set; this guarantees that $A''v = 0$ for all $v\in\ker\phi$.  

Let $v_0$ be a right $\lambda$-eigenvector for $A'$.  
Let us choose the rows $w_i^t$ in such a way that 
\begin{enumerate}
\item  $v_0 + s_1v_1+\cdots +s_lv_l$ is a $\lambda^m$-eigenvector of $A''$ for some real numbers $s_1,\ldots,s_l$; and 
\item  this vector has strictly positive entries.  
\end{enumerate}

Under the hypothesis that $w_i^t\in (\ker\phi)^\perp$, condition (1) becomes 
\begin{align*}
((A')^m+v_1w_1^t + \cdots + v_lw_l^t)\big( v_0+\sum_{j=1}^l s_jv_j\big)  & = \lambda^m\big( v_0+\sum_{j=1}^l s_jv_j\big) \\
\lambda^mv_0 + \sum_{j=1}^l (w_j^tv_0)v_j  & = \lambda^m\big( v_0+\sum_{j=1}^l s_jv_j\big) \\
\implies s_j & = \frac{w_j^tv_0}{\lambda^m}.
\end{align*}

$\phi(v_0)\in G\otimes\R$ is a right $\lambda$-eigenvector of $\alpha^{-1}$, and we may assume by replacing $v_0$ with $-v_0$ if necessary that $\tau(\phi(v_0)) > 0$.  
This means that $\phi(v_0)$ is strictly in the interior of $(G\otimes \R)^+ := \{ x\in G\otimes \R : \tau(x) > 0\} \cup \{ 0\}$ (which is the smallest real cone in $G\otimes \R$ containing $G^+$), so by the increasing monoid condition, $\phi (v_0)$ can be expressed as a positive real combination of $\alpha^n(x_1),\ldots, \alpha^n(x_k)$ for some sufficiently large $n$.  
Then, if we replace $F$ with $\{ \alpha^n(x_1),\ldots, \alpha^n(x_k)\}$, so that $\phi : e_i\to \alpha^n(x_i)$, this means that condition (2) is satisfied for at least one tuple $(s_1,\ldots,s_l)^t\in \R^l$.  

Let $S = \{ (s_1,\ldots,s_l)^t\in\R^l : v_0+s_1v_1+\cdots + s_lv_l $ is strictly positive $\}$.  
Then $S$ is convex: if 
\begin{align*}
v_0 + s_1v_1 + \cdots + s_lv_l & \quad \text{and} \\
v_0 + s_1'v_1 + \cdots + s_l'v_l & 
\end{align*}
are strictly positive and $0 \leq r \leq 1$, then 
\begin{align*}
& \phantom{=} v_0 + (rs_1+(1-r)s_1')v_1 + \cdots + (rs_l+(1-r)s_l')v_l \\
& = r(v_0+s_1v_1+\cdots +s_lv_l) + (1-r)(v_0+s_1'v_1+\cdots + s_l'v_l)
\end{align*}
is a sum of two strictly positive vectors, and hence is strictly positive.  

Assuming that $k>r$ (which we may as well do, otherwise the original choice of $A$ would have been sufficient), the set $\{x_1,\ldots,x_k\}$ is not linearly independent in $G\otimes \R \cong \R^r$, so any vector in the interior of the positive cone generated by these elements can be represented as a positive combination of them in at least $k-r=l$ linearly independent ways.  
(To see this, apply \cite[Exercise 2.36]{L:convex} to the convex hull of $\{ 0, \mu x_1,\ldots, \mu x_k\}$, where $\mu$ is large enough that this hull contains the given interior point.)  
Therefore $S$ is a convex subset of $\R^l$ with interior.  

There must be some $w^t\in(\ker\phi)^\perp$ such that $w^tv_0\neq 0$, for otherwise $v_0$ would be in $\ker\phi$.  
Then, as $(\ker\phi)^\perp$ is spanned by integer vectors, it is possible to choose an integer row vector $w_0^t\in (\ker\phi)^\perp$ with $w_0^tv_0\neq 0$.  
Because $\lambda > 1$, there exists $m \in \N$ such that $S$ contains an element of the lattice $\frac{w_0^tv_0}{\lambda^m}\Z^l$; call this element $\frac{w_0^tv_0}{\lambda^m}(a_1,\ldots,a_l)^t$ with $a_i\in\Z$.  
Then 
\[
A'' = (A')^m + v_1(a_1w_0)^t + \cdots + v_l(a_lw_0)^t
\]
has strictly positive left and right $\lambda$-eigenvectors, and hence by Lemma \ref{LEM:eventually-positive} some positive integer power of it has strictly positive entries, and hence represents a positive homomorphism $\Z^k\to\Z^k$.  
\end{proof}

The following lemma was used in the proof of the ``if'' part of Proposition \ref{PROP:characterization}.  

\begin{lemma}\label{LEM:PF}
Let $G$ be a simple dimension group with trace $\tau$ that is unique up to multiplication by a positive scalar.  
Suppose there exist an order automorphism $\alpha:G\to G$ and a finite subset of $G^+$ satisfying the increasing monoid condition.  
Then $\alpha^{-1}$ has weak a Perron--Frobenius eigenvalue $\lambda$, and, when viewed as an element of $(G\otimes\R)^*$, $\tau$ is a $\lambda$-eigenvector of $(\alpha^{-1})^* : (G\otimes \R)^*\to (G\otimes \R)^*$.  
\end{lemma}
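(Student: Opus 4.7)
The plan is to first use the uniqueness of the trace to extract $\lambda$ and verify the eigenvector assertion, and then to identify $\lambda$ with the Perron--Frobenius eigenvalue of the non-negative transition matrix attached to $F$.  Since $\alpha^{-1}$ is an order automorphism, $\tau\circ\alpha^{-1}$ is itself a trace on $G$; uniqueness up to a positive scalar gives $\tau\circ\alpha^{-1}=\lambda\tau$ for some $\lambda>0$, and extending $\tau$ linearly to $(G\otimes\R)^{*}$ converts this directly into $(\alpha^{-1})^{*}\tau=\lambda\tau$.  So the eigenvector claim for $\tau$ is immediate; only the assertion that $\lambda$ is a weak Perron--Frobenius eigenvalue of $\alpha^{-1}$ remains.

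Let $F=\{x_{1},\ldots,x_{k}\}$ and use the monoid condition to pick non-negative integers with $x_{i}=\sum_{j}A_{ji}\alpha(x_{j})$, producing a non-negative integer matrix $A$; the map $\phi:\Z^{k}\to G$, $e_{i}\mapsto x_{i}$, then fits into the commuting square $\phi\circ A=\alpha^{-1}\circ\phi$.  The subgroup $\langle F\rangle\subset G$ has the same rank $r$ as $G$, since $\bigcup_{n}\alpha^{n}(\langle F\rangle)=G$ is an increasing union of rank-$r$ subgroups, so the $\R$-linear extension $\phi\otimes\R:\R^{k}\to G\otimes\R$ is surjective with kernel $\ker\phi\otimes\R$; and $A$ preserves $\ker\phi$ and descends through the quotient to $\alpha^{-1}$ on $G\otimes\R$.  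In particular, the characteristic polynomial of $\alpha^{-1}$ divides that of $A$, so it will be enough to show that $\lambda$ is a simple, strictly dominant eigenvalue of $A$.  Dualising the square and using that simplicity forces $\tau>0$ on $G^{+}\setminus\{0\}$, the vector $\phi^{*}(\tau)=(\tau(x_{1}),\ldots,\tau(x_{k}))^{t}$ is a strictly positive left $\lambda$-eigenvector of $A$; any non-negative matrix admitting such an eigenvector has its eigenvalue equal to the spectral radius, so $\lambda=\rho(A)$.

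The main step is then to show that $A$ is primitive, whereupon Lemma~\ref{LEM:eventually-positive} gives that $\lambda=\rho(A)$ is simple and strictly dominant within the spectrum of $A$, and these properties pass to the sub-spectrum that is the spectrum of $\alpha^{-1}$.  Simplicity of $G$ yields, for every pair $i,j$, some $N_{ij}\in\N$ with $N_{ij}x_{i}\geq x_{j}$; the monoid condition, applied to the finite collection $\{x_{l}\}\cup\{N_{ij}x_{i}-x_{j}\}$, then provides a uniform $M$ for which every member lies in $\Mon(\alpha^{M}(F))$.  Expressing $N_{ij}x_{i}$ in two ways---as a non-negative combination of $\alpha^{M}(x_{l})$'s coming from the monoid condition, and as $\sum_{l}N_{ij}(A^{M})_{li}\alpha^{M}(x_{l})$ coming from iterating the defining relation for $A$---and using that any two representations of the same element of $G$ differ by an element of $\ker\phi$, I conclude that modulo $\ker\phi$ the $i$th column of $A^{M}$ dominates a positive multiple of the $j$th column; swapping $i$ and $j$ and enlarging $M$ further forces all columns of $A^{M}$ to share the same support modulo $\ker\phi$.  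Any genuine zero row of $A^{M}$ would force $\alpha^{-M}(G)$ into a subgroup of strictly smaller rank, impossible since $\alpha^{-M}$ is an automorphism of the rank-$r$ group $G$; hence $A^{M}$ has all positive entries for $M$ sufficiently large.

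The hardest part of the argument is this column-domination step.  Non-uniqueness of the non-negative representation in $\Mon(\alpha^{M}(F))$ (which occurs whenever $F$ is $\Z$-linearly dependent, i.e., $k>r$) means the comparisons are only well-defined modulo $\ker\phi$, and it is the rank obstruction on $G$---via the automorphism property of $\alpha^{-M}$---that converts ``domination modulo $\ker\phi$'' into genuine entrywise positivity of $A^{M}$.
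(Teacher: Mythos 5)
Your first paragraph (uniqueness of the trace gives $\tau\circ\alpha^{-1}=\lambda\tau$, hence $(\alpha^{-1})^{*}\tau=\lambda\tau$) matches the paper and is fine, as is the observation that the characteristic polynomial of $\alpha^{-1}$ divides that of the transition matrix $A$ and that $\phi^{*}(\tau)$ is a strictly positive left $\lambda$-eigenvector of $A$, so $\lambda=\rho(A)$. The gap is in the main step: the claim that $A$ is primitive is false in general, and the argument offered for it does not close. When $k>r$ the columns of $A$ are only determined modulo $\ker\phi$, and ``the $i$th column of $A^{M}$ dominates a positive multiple of the $j$th column \emph{modulo} $\ker\phi$'' carries no entrywise information whatsoever; the rank argument you append only excludes zero \emph{rows}, not zero entries. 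Concretely, take $G=\Z[1/2]$, $\alpha(x)=x/2$, $F=\{x_{1},x_{2}\}$ with $x_{1}=x_{2}=1$, and the legitimate transition matrix $A=2I_{2}$: it is non-negative, has strictly positive left eigenvector $(1,1)$, has no zero rows in any power, and satisfies your column-domination-mod-$\ker\phi$ conclusion (the columns of $A^{M}$ are congruent mod $\ker\phi=\langle e_{1}-e_{2}\rangle$), yet no power of it is positive and $\lambda=2$ is not a simple eigenvalue of $A$. Indeed, the entire second half of the paper's proof of the ``if'' direction of Proposition \ref{PROP:characterization} exists precisely because the naive transition matrix must be \emph{replaced} (by $A'$ and then $A''$) before primitivity can be arranged, and that replacement itself uses Lemma \ref{LEM:PF} as an input; so deducing the lemma from primitivity of $A$ is both unavailable and structurally backwards.

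The paper's route avoids the matrix entirely. It supposes $(\alpha^{-1})^{*}$ has an eigenvalue $\mu$ with $|\mu|\geq\lambda$ and eigenfunctional $\gamma$, chooses $M$ with $\tau(g_{i})>M|\gamma(g_{i})|$ on the finite set $F$, observes that this inequality is preserved under addition (so it holds on $\Mon(F)\setminus\{0\}$) and under application of $\alpha^{l}$ (since $\tau$ scales by $\lambda^{-l}$ while $|\gamma|$ scales by $|\mu|^{-l}\leq\lambda^{-l}$), and concludes it holds on all of $G^{+}=\bigcup_{l}\alpha^{l}(\Mon(F))$; this contradicts the fact that the half-space $\{\tau>0\}$ in $G\otimes\R$ must contain elements of $G$ with $M|\gamma(g)|>\tau(g)$. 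If you want to salvage your outline, you would need to prove the weak dominance and simplicity of $\lambda$ for $\alpha^{-1}$ directly (e.g., by this kind of growth comparison), not via primitivity of a transition matrix that need not be primitive.
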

\begin{proof}
It is clear from the hypotheses that $\tau$ is an eigenvector of $\alpha^*$ (and hence of $(\alpha^{-1})^*$) because $\tau\circ \alpha : G\otimes \R\to \R$ restricts to an order-preserving homomorphism of $G$ into $\R$, which is a positive multiple of $\tau$ by assumption.  

Now let us show that the eigenvalue $\lambda$ associated to $\tau$ is a weak Perron--Frobenius eigenvalue of $\alpha^{-1}$.  
Suppose for a contradiction that there is some other complex eigenvalue $\mu$ of $(\alpha^{-1})^*$ such that $|\mu|\geq \lambda$.  
Then there is some complex-valued linear homomorphism $\gamma : G\otimes\R\to \C$ such that $\gamma(\alpha^{-1}(g)) = \mu\gamma(g)$ for all $g\in G$.  

Let $F = \{ g_1,\ldots , g_k\}\subset G^+$ be a finite set such that $(\alpha,F)$ satisfies the increasing monoid condition.  
Then, as mentioned in Section \ref{SEC:notation}, the simplicity of $G$ implies that $\tau(g_i)>0$ for all $i\leq k$.  
Thus we may choose $M > 0$ such that $\tau(g_i) > M |\gamma(g_i)|$ for all $i\leq k$.  
Now if $h_1,h_2\in G^+$ satisfy $\tau(h_i)>M|\gamma(h_i)|$, then 
\begin{align*}
\tau(h_1+h_2)  = \tau(h_1)+\tau(h_2)  & \geq M|\gamma(h_1)| + M|\gamma(h_2)| \\
& \geq M|\gamma(h_1)+\gamma(h_2)| \\
& = M|\gamma(h_1+h_2)|.
\end{align*}
Therefore $\tau(g) > M |\gamma(g)|$ for all non-zero $g\in \Mon (F)$.  

Further, if $l\in\N$ and $g\in \alpha^l(\Mon (F))$, say $g = \alpha^l(g')$ with $g'\in\Mon (F)$, then 
\begin{align*}
\tau(g) = \tau(\alpha^l(g')) & = \lambda^{-l}\tau(g') \\
& \geq |\mu|^{-l} M |\gamma(g')| \\
& = M |\gamma(\alpha^{l}(g'))| = M |\gamma(g)|.
\end{align*}
Thus this same inequality holds for all elements of $\alpha^l(\Mon(F))$ with $l\in \N$.  

But this contradicts the hypothesis that $G^+ = \bigcup_l \alpha^l(\Mon(F))$, because every ball of sufficiently large radius in $G\otimes\R$ contains an element of $G$, and so the open half space $\{ x\in G\otimes \R : \tau(x)>0\}$ certainly contains an element of $\{ g\in G : M|\gamma(g)| > \tau(g)\}$, which is the intersection of $G$ with a union of half spaces in $G\otimes \R$.  
\end{proof}

\section{An intrinsic characterization of stationarity for unordered torsion-free abelian groups}\label{SEC:unordered}

The same arguments that were used in \cite{H:almost-ultrasimplicial} to prove Lemma \ref{LEM:increasing-semigroups} can also be used to prove Lemma \ref{LEM:increasing-subgroups}, below, which is the corresponding statement for inductive limits in the category of unordered torsion-free abelian groups.  

\begin{lemma}\label{LEM:increasing-subgroups}
Suppose that $G$ is a torsion-free abelian group with an increasing sequence of finitely-generated subgroups, $G_1\subset G_2\subset \cdots $ such that $G = \bigcup G_n$, with each $G_n$ generated by $\{a_i^{(n)}\}_{i=1}^{k_n}$.  
Suppose that $A_n$ is a transition matrix associated to this choice of generators for $G_n\subset G_{n+1}$; i.e., $a_i^{(n)} = \sum_{j=1}^{k_{n+1}}(A_n)_{ji}a_j^{(n+1)}$.  
Form the group $K = \lim A_n : \Z^{k_n} \to \Z^{k_{n+1}}$.  
\begin{enumerate}
\item  There is a unique group homomorphism $\Phi : K\to G$ such that $[e_i^{(n)},n]\mapsto a_i^{(n)}$; and 
\item  If $\Phi$ is one to one then it is an isomorphism of abelian groups.  
\end{enumerate}
\end{lemma}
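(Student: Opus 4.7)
The plan is to follow the same strategy as in the proof of Lemma \ref{LEM:increasing-semigroups}, simply deleting all references to positivity and the positive cone. For each $n$, I would define the group homomorphism $\phi_n : \Z^{k_n} \to G$ by $e_i^{(n)} \mapsto a_i^{(n)}$, extended $\Z$-linearly. The defining relation $a_i^{(n)} = \sum_{j=1}^{k_{n+1}}(A_n)_{ji} a_j^{(n+1)}$ says exactly that $\phi_{n+1} \circ A_n = \phi_n$, so the family $\{\phi_n\}$ is compatible with the transition maps of the inductive system whose limit is $K$.

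By the universal property of the inductive limit in the category of abelian groups, there is then a unique homomorphism $\Phi : K \to G$ with $\Phi([g,n]) = \phi_n(g)$ for every $n$ and every $g \in \Z^{k_n}$; in particular $\Phi([e_i^{(n)},n]) = a_i^{(n)}$, which gives existence in (1). Uniqueness of $\Phi$ among homomorphisms satisfying this condition on generators is immediate because every element of $K$ is represented by some pair $(g,n)$ with $g$ a $\Z$-linear combination of the $e_i^{(n)}$, so $\Phi$ is determined by its values on the classes $[e_i^{(n)},n]$.

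For part (2), injectivity of $\Phi$ is the hypothesis, so only surjectivity remains. Since $\Phi([e_i^{(n)},n]) = a_i^{(n)}$ for all $n$ and $i$, the image of $\Phi$ contains the subgroup generated by $\{a_i^{(n)}\}_{i=1}^{k_n}$, namely $G_n$, for every $n$; hence it contains $G = \bigcup_n G_n$. Thus $\Phi$ is a bijective group homomorphism, i.e., an isomorphism of abelian groups.

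There is essentially no obstacle here: the only role of the positive cone in the proof of Lemma \ref{LEM:increasing-semigroups} was to establish the stronger conclusion that $\Phi(K^+) = G^+$ and that a bijective $\Phi$ is an \emph{order} isomorphism, and neither of these is needed in the unordered setting. All of the universal-property machinery goes through unchanged, so the proof in \cite{H:almost-ultrasimplicial} can be quoted nearly verbatim.
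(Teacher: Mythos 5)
Your proof is correct and matches the paper's approach: the paper itself gives no details, simply noting that the arguments used to prove Lemma \ref{LEM:increasing-semigroups} in \cite{H:almost-ultrasimplicial} carry over verbatim to the unordered setting, which is exactly the adaptation you carry out. Your verification of the compatibility $\phi_{n+1}\circ A_n = \phi_n$ and the surjectivity argument via $G=\bigcup G_n$ are the right details to check.
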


There is likewise a notion of a stationary unordered torsion-free abelian group.  
\begin{defn}\label{DEF:stationary}
A torsion-free abelian group is \defemph{stationary} if it is isomorphic to the inductive limit of a stationary sequence in the category of torsion-free abelian groups.  
\end{defn}

Stationarity can be characterized intrinsically using the following condition.  
\begin{defn}\label{DEF:subgroup-condition}
Let $G$ be a torsion-free abelian group, let $\alpha: G\to G$ be an automorphism, and let $S\subset G$ be a subset.  
Let us say that the pair $(\alpha,S)$ satisfies the \defemph{increasing subgroup condition} if $G$ is the increasing union of the subgroups $\langle\alpha^n(S)\rangle$.  
\end{defn}

\begin{prop}\label{PROP:unordered}
Let $G$ be a torsion-free abelian group.  
Then $G$ is stationary if and only if it has an automorphism $\alpha$ and a finite subset $F\subset G$ that satisfy the increasing subgroup condition.  
\end{prop}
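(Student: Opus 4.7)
The plan is to mirror the argument for Proposition \ref{PROP:characterization}, with the ordered structure and the Perron--Frobenius machinery stripped away. The ``only if'' direction is essentially identical to the ordered case, while the ``if'' direction is substantially simpler because we no longer need to arrange positivity of the transition matrix or its eigenvectors.

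For the ``only if'' direction, I would suppose $G$ is the inductive limit of a constant system $A:\Z^k\to\Z^k$ and, exactly as in the proof of Proposition \ref{PROP:characterization}, use the identity map $I:\Z^k\to\Z^k$ to produce a morphism from that system into itself shifted by one stage. The universal property yields an automorphism $\alpha:G\to G$, and taking $F = \{[e_i,1]\}_{i=1}^k$ the identity $\alpha^{n-1}([e_i,1]) = [e_i,n]$ shows that every $[g,n]\in G$ lies in $\langle\alpha^{n-1}(F)\rangle$, so $(\alpha,F)$ satisfies the increasing subgroup condition.

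For the ``if'' direction, suppose $F = \{x_1,\ldots,x_k\}\subset G$ satisfies the increasing subgroup condition with respect to $\alpha$. The ``increasing'' part of the condition forces each $x_i$ to be an integer combination of $\alpha(x_1),\ldots,\alpha(x_k)$, giving an integer matrix $A$ with $x_i = \sum_j(A)_{ji}\alpha(x_j)$. If $\phi:\Z^k\to G$ sends $e_i\mapsto x_i$ then $\phi\circ A = \alpha^{-1}\circ\phi$, so Lemma \ref{LEM:increasing-subgroups}, applied to $G_n = \langle\alpha^{n-1}(F)\rangle$ with constant transition matrix $A$, yields a homomorphism $\Phi:\lim A\to G$ that will be an isomorphism as soon as it is injective. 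Because $G$ is torsion-free, so is $\Z^k/\ker\phi$, which is then free, so the sequence $0\to\ker\phi\to\Z^k\to\Z^k/\ker\phi\to 0$ splits and I can run the modification from the proof of Proposition \ref{PROP:characterization}: extend a basis of $\ker\phi$ to a basis of $\Z^k$, form the corresponding invertible integer matrix $L$ and diagonal projector $D_l$, and replace $A$ by $A' := A - LD_lL^{-1}A$. This $A'$ still satisfies $\phi\circ A' = \alpha^{-1}\circ\phi$, so it remains a valid constant transition matrix, but in addition it annihilates $\ker\phi$. Injectivity of $\Phi$ for the system $A'$ is then immediate: if $\Phi([g,n]) = \alpha^{n-1}(\phi(g)) = 0$ then $g\in\ker\phi$, so $A'g = 0$ and $[g,n] = [A'g,n+1] = 0$ in $\lim A'$.

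I expect no significant obstacle. Every delicate step in the ordered proof---Lemma \ref{LEM:eventually-positive}, the weak Perron--Frobenius eigenvalue, Diagram \ref{DIAG:dual}, and the convex-set argument producing the integer vectors $w_i^t$---was in service of upgrading $A'$ from an arbitrary integer matrix to a non-negative one some power of which is strictly positive. In the unordered category there is no positivity to arrange, so the proof terminates the moment $A'$ is shown to kill $\ker\phi$.
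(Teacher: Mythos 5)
Your proposal is correct and follows exactly the route the paper takes: its proof of Proposition \ref{PROP:unordered} consists precisely of re-running the argument for Proposition \ref{PROP:characterization} and stopping once the matrix $A'$ has been constructed, which is what you do. Your explicit injectivity argument for $\Phi$ (using $\alpha$ an automorphism to get $g\in\ker\phi$, then $A'g=0$ to kill $[g,n]$) correctly supplies the step the paper leaves implicit.
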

\begin{proof}
The proof uses the same arguments as the proof of Proposition \ref{PROP:characterization}, but for the ``if'' case, it suffices to stop once the matrix $A'$ has been constructed.  
\end{proof}

\begin{remark}\label{REM:subgroup-rank}
Note that it is always possible to choose a finite subset $F$ in Proposition \ref{PROP:unordered} that has a number of elements equal to the rank of $G$.  
This is because the subgroup $\langle \alpha^n(F)\rangle$ is the same as $\alpha^n(\langle F\rangle )$, and so we can replace $F$ with any basis for the free abelian group $\langle F\rangle$; such a basis necessarily has no more than $\rank G$ elements.  
It is also clear that such a basis can have no fewer than $\rank G$ elements, as $\rank G \leq \sup_n \rank \langle \alpha^n(F)\rangle = \rank \langle F\rangle$.  
\end{remark}

One direction of Theorem \ref{THM:description}, the main theorem, says that for a stationary simple dimension group the kernel of the trace is stationary in the category of unordered torsion-free abelian groups.  
The other direction of the main theorem says that the kernel of the trace can be any stationary unordered abelian group; Proposition \ref{PROP:unordered} will be useful in proving this statement.

\section{A concrete description of simple stationary limits}\label{SEC:description}

The main result of this section is Theorem \ref{THM:description}, which describes a simple stationary dimension group in terms of a simple stationary totally ordered dimension group (the image of the trace) and a stationary unordered group (the kernel of the trace).  
The following lemmas will be useful for this purpose.  

\begin{lemma}\label{LEM:quotient}
Let $A$ be a $k\times k$ primitive integer matrix with irrational Perron--Frobenius eigenvalue $\lambda$.  
Let $F\subset \Z^k$ be an $A$-invariant rank-$n$ subgroup such that $\Z^k/F$ is free and $F\otimes \R$ has trivial intersection with the $\lambda$-eigenspace of $A$.  
Let $A'$ denote a matrix representing the homomorphism induced by $A$ on $\Z^k/F$ with respect to some basis.  
Then there exists $P\in SL(n,\Z)$ such that $PA'P^{-1}$ is primitive.  
\end{lemma}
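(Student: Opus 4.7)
The plan is to produce $P$ verifying the two hypotheses of Lemma \ref{LEM:eventually-positive} for $PA'P^{-1}$: a weak Perron--Frobenius eigenvalue possessing strictly positive left and right eigenvectors. I would first check that $\lambda$ itself is a weak PF eigenvalue of $A'$. Since $F$ is $A$-invariant and $\Z^k/F$ is free, the characteristic polynomial of $A$ factors as $\chi_A=\chi_{A|_F}\cdot\chi_{A'}$; the hypothesis that $F\otimes\R$ misses the one-dimensional $\lambda$-eigenspace of $A$ forces $\lambda$ to contribute to $\chi_{A'}$ with multiplicity one, while the other roots of $\chi_{A'}$ are eigenvalues of $A$ and thus have strictly smaller modulus.

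Next I would identify the eigenvectors. Let $v$ and $w$ be the positive right and left PF eigenvectors of $A$. The projection $\bar v$ of $v$ to $(\Z^k/F)\otimes\R \cong \R^{k-n}$ is a nonzero right $\lambda$-eigenvector of $A'$. For the left eigenvector, $A^*$ preserves the annihilator $F^\perp\subset(\R^k)^*$ with induced map $(A')^*$; since $(A')^*$ has $\lambda$ as an eigenvalue and the one-dimensional $\lambda$-eigenspace of $A^*$ is spanned by $w$, we get $w\in F^\perp$, so $w$ descends to a left $\lambda$-eigenvector $\bar w$ of $A'$, with $\bar w\cdot\bar v = w\cdot v > 0$. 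Finally $\bar v\notin\Q^{k-n}$, for otherwise $A'\bar v=\lambda\bar v$ would be simultaneously rational and irrational; the same holds for $\bar w$, and in particular $k-n \geq [\Q(\lambda):\Q] \geq 2$.

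The main step is then to find $P\in SL(k-n,\Z)$ such that both $P\bar v$ and $\bar w P^{-1}$ are strictly positive componentwise. The condition on $P$ defines an open subset $U$ of $SL(k-n,\R)$ that is nonempty: because $\bar w\bar v>0$, one can explicitly build a real change of basis sending $\bar v$ into the positive orthant of $\R^{k-n}$ and $\bar w$ into the positive orthant of $(\R^{k-n})^*$. I would then produce an integer $P\in U$ by exploiting the irrationality of $(\bar v,\bar w)$ via a Diophantine argument. Once $P$ is in hand, Lemma \ref{LEM:eventually-positive} yields primitivity of $PA'P^{-1}$ immediately, since conjugation preserves the weak PF property and replaces the eigenvectors $\bar v$ and $\bar w$ by $P\bar v$ and $\bar w P^{-1}$, both strictly positive by construction.

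The principal obstacle is making that last step rigorous: locating a lattice point of $SL(k-n,\Z)$ inside the open set $U$. A natural route is to show that the $SL(k-n,\Z)$-orbit of the irrational pair $(\bar v,\bar w)$ meets the positivity region inside the hypersurface $\{(v',w'):w'v'=\bar w\bar v\}$; this is not a pure density statement (the orbit is essentially discrete in the hypersurface), but the combination of openness of the target region, $\bar w\bar v>0$, and the fact that the coordinates of $\bar v$ and $\bar w$ generate dense $\Z$-submodules of $\R$ (since they lie in $\Q(\lambda)$ with $\lambda$ irrational) should be leveraged in the spirit of Dirichlet/Minkowski to produce such $P$. A fallback route that bypasses the Diophantine difficulty is to endow the quotient $G/F_1$ (where $G=\lim A$ and $F_1$ is the image of $F$) with the total order induced by the descending trace $\tau$, realize this totally ordered simple stationary dimension group concretely as $\lim B$ with $B$ primitive via the totally ordered case already settled in \cite{H:irrational}, and then align stage-one inclusions (after shifting to a common higher stage if necessary) to conclude $SL(k-n,\Z)$-conjugacy of $A'$ with the primitive matrix $B$.
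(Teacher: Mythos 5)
Your first two paragraphs reproduce exactly the reductions the paper makes: $\lambda$ is a weak Perron--Frobenius eigenvalue of $A'$ because $\chi_{A'}\mid\chi_A$ and $F\otimes\R$ misses the $\lambda$-eigenspace, and the irrationality of $\lambda$ forces the eigenvector of the integer matrix $A'$ to have an irrational ratio of entries. But at that point the paper simply invokes \cite[Theorem 2.2]{H:irrational}, which is precisely the statement you are left needing: an integer matrix with an irrational weak Perron--Frobenius eigenvalue is conjugate over $SL(\cdot,\Z)$ to a primitive matrix. Your third and fourth paragraphs attempt to prove that theorem from scratch, and this is where the proposal has a genuine gap. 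The existence of $P\in SL(k-n,\Z)$ with $P\bar v$ and $\bar wP^{-1}$ simultaneously strictly positive is the entire non-trivial content of the lemma; the sketch (``openness of the target region'' plus density of the $\Z$-modules generated by the coordinates, ``in the spirit of Dirichlet/Minkowski'') names the ingredients of Handelman's argument but does not carry it out, and as you yourself note the $SL(k-n,\Z)$-orbit is essentially discrete in the relevant hypersurface, so no soft density argument closes it. A correct write-up should either cite \cite[Theorem 2.2]{H:irrational} at this point, as the paper does, or actually supply the Diophantine construction.

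The fallback route does not repair this. Realizing the totally ordered quotient dimension group concretely as $\varinjlim B$ with $B$ primitive gives an isomorphism of inductive limits, which translates into a \emph{shift equivalence} between $A'$ and $B$ over $\Z$, not a conjugacy; ``aligning stage-one inclusions after shifting to a common higher stage'' would at best conjugate a common power or compose the intertwiners into something non-invertible. Since the lemma demands genuine $SL(n,\Z)$-conjugacy of $A'$ itself to a primitive matrix, this detour lands you back at the same theorem of \cite{H:irrational} you were trying to avoid. (One further small point: the roles of $n$ and $k-n$ should be kept consistent with the lemma's statement, where $P$ acts on the quotient $\Z^k/F$.)
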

\begin{proof}
Because $F\otimes \R$ has trivial intersection with the $\lambda$-eigenspace of $A$, $\lambda$ is also an eigenvalue of $A'$.  
Since the characteristic polynomial of $A'$ divides that of $A$, $\lambda$ is also a weak Perron--Frobenius eigenvalue of $A'$.  
$A'$ has integer entries, so any right $\lambda$-eigenvector of $A'$ necessarily has at least two entries, the ratio of which is irrational.  
By \cite[Theorem 2.2]{H:irrational}, this implies the existence of $P$.  
\end{proof}
\begin{remark}\label{REM:rational-quotient}
The conclusion of Lemma \ref{LEM:quotient} is no longer true if the hypothesis that $\lambda\notin\Q$ is dropped.  
To see this, consider the primitive matrix 
\[
A = \left( \begin{array}{rrr}
1 & 2 & 2 \\
1 & 4 & 0 \\
1 & 0 & 4 
\end{array} \right)
\]
modulo the invariant subgroup $F = \langle (-4,1,1)^t\rangle$.  
With respect to the basis $(1,0,0)^t + F$ and $(0,1,0)^t+F$ of $\Z^3/F$, the induced homomorphism $A'$ has matrix 
\[
A' = \left( \begin{array}{rr}
5 & 2 \\
0 & 4
\end{array}\right),
\]
which is not similar to any primitive matrix, as it has weak left and right Perron--Frobenius eigenvectors $(1,2)$ and $(1,0)^t$ respectively, the product of which is $1 < 2$.  
From the discussion following Theorem 2.2 in \cite{H:irrational}, in order for an $n\times n$ integer matrix $A'$ with an integer weak Perron--Frobenius eigenvalue to be similar to a primitive matrix, its left and right Perron--Frobenius eigenvectors, when expressed in lowest terms, must have product with modulus at least $n$.  
(This condition was shown in \cite{H:rational} to be sufficient as well.)  
\end{remark}

\begin{lemma}\label{LEM:eigenvector-entries}
Let $A$ be a primitive $k\times k$ matrix with integer entries and Perron--Frobenius eigenvalue $\lambda$.  
Then a $\lambda$-eigenvector, right or left, of $A$ can be chosen such that its entries form a $\Q$-basis for $\Q(\lambda)$, the field extension of $\Q$ by $\lambda$.  
\end{lemma}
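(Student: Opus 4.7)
My plan is to produce an eigenvector with entries in $\Q(\lambda)$ and then show that the $\Q$-span of its entries is all of $\Q(\lambda)$; this immediately implies that some subset of the entries forms a $\Q$-basis of $\Q(\lambda)$. For the first step I would note that because $A$ is primitive, Perron--Frobenius gives $\lambda$ as a simple eigenvalue, so the $\lambda$-eigenspace in $\C^k$ is one-dimensional, and since $A$ has integer entries, $\lambda$ is an algebraic integer. A concrete eigenvector with entries in $\Q(\lambda)$ can be obtained as any nonzero column of the adjugate $\adj(\lambda I - A)$: its entries are integer polynomials in $\lambda$, and the identity $(\lambda I - A)\adj(\lambda I - A) = \det(\lambda I - A)\,I = 0$ shows that every column is a $\lambda$-eigenvector. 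At least one column is nonzero because $\lambda I - A$ has rank exactly $k-1$ (as its kernel is one-dimensional), so $\adj(\lambda I - A)$ has rank one. I would fix such a vector $v = (v_1,\ldots,v_k)^t \in \Q(\lambda)^k$.

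The heart of the proof is the observation that $V := \spa_\Q\{v_1,\ldots,v_k\} \subseteq \Q(\lambda)$ is closed under multiplication by $\lambda$. Indeed, the $i$th coordinate of $Av = \lambda v$ reads $\lambda v_i = \sum_j A_{ij} v_j$, and since the $A_{ij}$ are integers this sum lies in $V$. Thus $\lambda V \subseteq V$, and iterating shows $V$ is stable under all of $\Q[\lambda]$. But $\lambda$ is algebraic over $\Q$, so $\Q[\lambda] = \Q(\lambda)$, and $V$ becomes a $\Q(\lambda)$-subspace of the one-dimensional $\Q(\lambda)$-vector space $\Q(\lambda)$. The only options are $V = 0$ and $V = \Q(\lambda)$; since $v \neq 0$ rules out the former, one concludes $V = \Q(\lambda)$. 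Hence the entries of $v$ span $\Q(\lambda)$ over $\Q$, and consequently a subset of them constitutes a $\Q$-basis of $\Q(\lambda)$. The left-eigenvector case is handled identically by replacing $A$ with $A^t$, which is also primitive with the same Perron--Frobenius eigenvalue.

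The argument has essentially no obstacle once one insists on working inside $\Q(\lambda)$ from the outset, which the adjugate construction guarantees automatically. The only semantic wrinkle I anticipate is that when $k > [\Q(\lambda):\Q]$ the $k$ entries cannot literally all be a $\Q$-basis of $\Q(\lambda)$; the intended reading of ``form a $\Q$-basis'' is that the entries span $\Q(\lambda)$ and therefore contain such a basis, which is precisely what the $\lambda$-invariance of $V$ delivers.
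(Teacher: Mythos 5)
Your proof is correct, and it takes a genuinely different and substantially more elementary route than the paper's. The paper proves this lemma by forming the dimension group $G = \varinjlim A$, showing that the image of its trace is stationary as a totally ordered group (via Lemma \ref{LEM:quotient} and a conjugation to a primitive matrix), and then invoking Handelman's classification of simple stationary totally ordered groups to conclude that a scalar multiple of $\tau(G)\otimes\Q$ is the field $\Q(\lambda)$, from which the statement about the entries of $w$ is extracted; the paper even remarks that the conclusion ``relies on results from [H:irrational].'' Your argument shows that it does not: once one has any nonzero eigenvector $v$ with entries in $\Q(\lambda)$ (your adjugate construction is fine, since simplicity of $\lambda$ forces $\adj(\lambda I - A)$ to be nonzero of rank one), the relation $\lambda v_i = \sum_j A_{ij}v_j$ with \emph{integer} $A_{ij}$ makes $V = \spa_\Q\{v_1,\ldots,v_k\}$ a $\Q[\lambda] = \Q(\lambda)$-submodule of $\Q(\lambda)$, hence all of $\Q(\lambda)$. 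This isolates exactly where integrality of $A$ enters (the counterexample in the paper's remark with the entry $\pi$ fails precisely at this step), and it works uniformly whether or not $\lambda\in\Z$, whereas the paper treats that case separately. What the paper's longer route buys is the intermediate fact that $\tau(G)$ is itself a stationary ordered group, which fits its broader narrative but is not needed for the lemma. Your closing caveat about ``form a $\Q$-basis'' versus ``span'' is also right: when $k$ exceeds $[\Q(\lambda):\Q]$ the entries cannot literally be a basis, and the paper's own proof and its subsequent applications (e.g.\ computing $\rank w^\perp = k-n$ and Lemma \ref{LEM:quotient-complement}) use only the spanning statement, which is what both arguments deliver.
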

\begin{proof}
If $\lambda\in\Z$, this is immediate.  
If $\lambda\notin \Z$, then $\lambda$ is irrational, and the claim is a consequence of the classification of simple stationary totally ordered groups in \cite{H:irrational}; this can be seen in the following way.  

Form the dimension group $G = \varinjlim A:\Z^k\to \Z^k$, and consider the trace $\tau : G\to \R$ arising from a positive left $\lambda$-eigenvector $w$ of $A$, as discussed in Section \ref{SEC:notation}.  
Let us denote the kernel of $\tau$ by $w^\perp$.  
Then $\tau(G)\subset \R$ is a simple ordered group; let us show that it is also stationary in the category of ordered abelian groups.  

As discussed in \cite[Section 3]{H:irrational}, $\Z^k/w^\perp$ is torsion-free, say of rank $n$, so $A$ induces a homomorphism $A'$ on $\Z^k/w^\perp$; this homomorphism can also be represented by an integer matrix, albeit one that is not necessarily primitive.  
$\tau(G)$ is isomorphic as a group to the limit of $A':\Z^k/w^\perp \to \Z^k/w^\perp$ in the category of unordered abelian groups.  
The pre-image of $\R^+\setminus \{ 0\}$ under this isomorphism consists of all $[g+w^\perp,i]$ with $i\in\N$ and $g+w^\perp\in \Z^k/w^\perp$ for which $w'(g+w^\perp) > 0$, where $w'$ is the left Perron--Frobenius eigenvector of $A'$ that corresponds to $w$ (i.e., the pullback of $w$).  
Taking the positive cone to be this pre-image, together with $0$, makes $\varinjlim A':\Z^k/w^\perp \to \Z^k/w^\perp$ into an ordered group.  

By Lemma \ref{LEM:quotient}, there exists $P\in SL(n,\Z)$ such that $PA'P^{-1}$ is primitive.  
Then $\varinjlim PA'P^{-1}:\Z^n\to \Z^n$ is order isomorphic to $\varinjlim A':\Z^k/w^\perp \to \Z^k/w^\perp$ with the order just described, and hence also to $\tau(G)$.  
Thus $\tau(G)$ is stationary as an ordered abelian group.  

But then by \cite{H:irrational}, Theorem 3.3 and the discussion at the end of Section 4, there exists $r\in\R^+$ such that $r\tau(G)\otimes\Q\subset \R$ is a field; this field necessarily contains $\lambda$, which is the image of $1$ under the automorphism $\alpha$ of $r\tau(G)$ induced by $A'$, as in the proof of the ``only if'' part of Proposition \ref{PROP:characterization}.  
Moreover, the dimension of this field over $\Q$ equals the rank of $A'$, which is the algebraic degree of $\lambda$ because, by the discussion at the beginning of Section 3 of \cite{H:irrational}, the characteristic polynomial of $A'$ is irreducible.  
Thus $r\tau(G)\otimes \Q = \Q(\lambda)$.  

This means that $rw$ has all of its entries in $\Q(\lambda)$ because these entries are precisely the products of $w$ with the standard basis elements; these products necessarily lie in $r\tau(G)\subset \Q(\lambda)$.  
The fact that these entries span $\Q(\lambda)$ follows from the fact that they generate the same additive subgroup of $\R$ as the entries of $rw'$, left multiplication by which maps $\Z^k/w^\perp$ injectively into $\R$.  

To prove the same claim for a right $\lambda$-eigenvector of $A$, simply repeat this argument using the transpose of $A$.  
\end{proof}

\begin{remark}
The conclusion of Lemma \ref{LEM:eigenvector-entries} is not obvious (at least to me), and relies on results from \cite{H:irrational}, which in turn require that $A$ be primitive and have integer entries.  
To see that these hypotheses cannot be dropped entirely, note that $\left( \begin{array}{rr} 1 \\ \pi \end{array}\right)$ is a $2$-eigenvector of both %
$\left( \begin{array}{rr} %
2 & 0 \\
0 & 2 
\end{array}\right)$ %
and %
$\left( \begin{array}{cc} %
2 - \pi/2 & 1/2 \\
\pi & 1 
\end{array} \right)$.  

Still, using \cite[Theorem 2.2]{H:irrational}, the hypothesis in Lemma \ref{LEM:eigenvector-entries} that $A$ be primitive can be weakened to the requirement that $A$ have a weak Perron--Frobenius eigenvalue, although this complicates the proof somewhat and is not necessary in what follows.  
\end{remark}

The purpose of the following lemma is to identify a copy of $\tau(G)$ inside of $G$, where $\tau: G \to \R$ is the trace.  

\begin{lemma}\label{LEM:quotient-complement}
Let $A$ be a $k\times k$ primitive integer matrix with Perron--Frobenius eigenvalue $\lambda$ and corresponding left eigenvector $w$.  
Then there is an $A$-invariant subgroup $F\subset \Z^k$, of rank equal to the algebraic degree of $\lambda$ over $\Q$, such that $\Z^k/F$ is free abelian and the homomorphism $\Z^k\to \R$ given by left multiplication by $w$ is injective on $F$.  
\end{lemma}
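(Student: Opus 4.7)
My plan is to take $F$ as the intersection with $\Z^k$ of the rational $A$-invariant subspace $V := \ker_\Q(m(A)) \subseteq \Q^k$, where $m(x) \in \Q[x]$ is the minimal polynomial of $\lambda$ over $\Q$ and $n = \deg m$.  The structural facts I would first establish are:  primitivity forces $\lambda$ to be a simple root of the characteristic polynomial $p_A$, and since $m$ is irreducible over $\Q$ with $\lambda$ a simple root, it follows that $m^2 \nmid p_A$; thus $p_A = m \cdot s$ with $\gcd(m,s) = 1$.  Hence every Galois conjugate $\lambda = \lambda_1, \ldots, \lambda_n$ of $\lambda$ is a simple eigenvalue of $A$, $V$ has $\Q$-dimension $n$, and $V \otimes \C = \bigoplus_{j=1}^n \C v_j$, where $v_j$ is an eigenvector for $\lambda_j$.

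Setting $F := V \cap \Z^k$:  purity of the rational subspace $V$ makes $F$ pure in $\Z^k$, so $\Z^k/F$ is torsion-free and finitely generated, hence free abelian; $\rank F = \dim_\Q V = n$; and $A$-invariance of $F$ is inherited from that of $V$.  The main obstacle, and the remaining task, is to show that left multiplication by $w$ is injective on $F$.  Since this injectivity is unaffected by rescaling $w$, I would invoke Lemma~\ref{LEM:eigenvector-entries} to choose $w$ with entries generating $\Q(\lambda)$ as a $\Q$-vector space; then the $\Q$-linear map $\phi : \Q^k \to \Q(\lambda)$, $v \mapsto wv$, is surjective and $\dim_\Q \ker_\Q(\phi) = k - n$.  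A dimension count then reduces the desired injectivity to the assertion $V \cap \ker_\Q(\phi) = 0$.

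For this I pass to $\C$ and use a Galois argument.  For each Galois element $\sigma_j$ with $\sigma_j(\lambda) = \lambda_j$, the row $\sigma_j(w)$ is a left $\lambda_j$-eigenvector of $A$; since the condition $wv = 0$ is $\Q$-linear in $v$, applying $\sigma_j$ yields $\sigma_j(w)v = 0$ for every $v \in \ker_\Q(\phi)$, so
\[
\ker_\Q(\phi) \otimes \C \;=\; \bigcap_{j=1}^n \ker_\C \sigma_j(w).
\]
The standard eigenvalue pairing $(\lambda_j - \lambda_i)\sigma_j(w)v_i = 0$ gives $\sigma_j(w)v_i = 0$ whenever $i \neq j$, whereas $\sigma_j(w)v_j = \sigma_j(wv_1) \neq 0$ by Perron positivity (after choosing $v_1$ in $\Q(\lambda)^k$ and setting $v_j = \sigma_j(v_1)$).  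Expanding any putative element of $(V \cap \ker_\Q(\phi)) \otimes \C$ as $\sum_j c_j v_j$ and applying each $\sigma_j(w)$ in turn forces $c_j = 0$ for all $j$, which completes the proof.
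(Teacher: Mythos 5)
Your proof is correct, and although its skeleton matches the paper's---both take $F = V \cap \Z^k$ where $V \otimes \C$ is the span of the Galois conjugates of the Perron right eigenvector, and both exploit the pairing $\sigma_j(w)v_i = 0$ for $i \neq j$---the two key technical steps are executed quite differently. To see that $V$ is rational of dimension $n$, the paper writes $v = \zeta_1 v_1 + \cdots + \zeta_n v_n$ for a basis $\{\zeta_i\}$ of $\Q(\lambda)$ and rational vectors $v_i$, and runs a field-trace computation (with the $\zeta_i$ adjusted so that the matrix of values $\Tr(\zeta_i/\zeta_j)$ is triangular with non-zero diagonal) to show each $v_i$ lies in $V$; your identification $V = \ker_\Q m(A)$, combined with the observation that simplicity of $\lambda$ forces $m^2 \nmid p_A$ and hence makes every conjugate $\lambda_j$ a simple eigenvalue, delivers rationality and the dimension count for free and is cleaner. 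For injectivity, the paper uses only $w$ itself: $wy=0$ forces the $v$-coefficient of $y$ to vanish, and the triangular trace matrix is reused to show that coefficient is non-zero for non-zero $y \in F$. You instead apply all $n$ conjugate rows $\sigma_j(w)$, which annihilate the rational kernel of $y \mapsto wy$ because that kernel is defined over $\Q$, and you kill every coefficient $c_j$ at once via the non-vanishing of $\sigma_j(w)v_j = \sigma_j(wv_1)$, obtained from Perron positivity of $w$ and $v_1$. The only points to make explicit in a final write-up are that $w$ and $v_1$ can both be taken in $\Q(\lambda)^k$ with strictly positive entries (Lemma \ref{LEM:eigenvector-entries} plus the one-dimensionality of the Perron eigenspaces), but that is routine; your route dispenses with the trace machinery entirely.
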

\begin{proof}
Let $n$ denote the degree of $\lambda$ over $\Q$.  
By Lemma \ref{LEM:eigenvector-entries} $A$ has a right $\lambda$-eigenvector $v$, the entries of which span $\Q(\lambda)$ over $\Q$.  

Let $L$ denote the smallest normal field extension of $\Q$ containing $\lambda$; then $v\in L^k$ and $A$ can be viewed as a linear operator on $L^k$.  
Let $\alpha$ be a field automorphism of $L$ that fixes $\Q$, and for $x\in L^k$ let $\alpha(x)$ denote the vector in $L^k$, the entries of which are the images of the entries of $x$ under $\alpha$.  
The entries of $v$ are rational polynomials in $\lambda$, and the statement that $v$ is a $\lambda$-eigenvector of $A$ is equivalent to saying that $\lambda$ satisfies a system of $k$ rational polynomials.  
But if $\lambda$ satisfies these $k$ polynomials, then so does $\alpha(\lambda)$, and so $\alpha(v)$ is an $\alpha(\lambda)$-eigenvector of $A$.  

Let $V\subset L^k$ denote the subspace spanned by all $\alpha(v)$ as $\alpha$ ranges over all such field automorphisms.  
Then $V$ is spanned by the $n$ vectors obtained from $v$ by replacing $\lambda$ with each of its $n$ algebraic conjugates, and hence has dimension at most $n$.  
Let us show that $V$ contains $n$ linearly independent vectors with rational entries.  

Choose a basis $\zeta_1,\ldots,\zeta_n$ for $\Q(\lambda)$ over $\Q$ with $\zeta_1 = 1$.  
Let us perform a sequence of alterations to these elements without changing the property that they form a basis for $\Q(\lambda)$.  

Let $\Tr$ denote the field trace of $L$ over $\Q$, i.e., the $\Q$-linear map $\Tr : L \to \Q$ sending any element to the sum of all its images under embeddings of $L$ in $\C$, which coincide with automorphisms of $L$ because $L$ is normal \cite[Chapter 2]{M:number-fields}.  
Let $m$ denote $\Tr(1) = [L:\Q]$.  
Then for each $i > 1$, replace $\zeta_i$ with $\zeta_i - \frac{\Tr(\zeta_i/\zeta_1)}{m}\zeta_1$.  
These new elements $\zeta_1,\ldots,\zeta_n$ still form a basis of $\Q(\lambda)$ over $\Q$, and moreover now $\Tr(\zeta_i/\zeta_1) = 0$ for all $i > 1$.  

Now for each $i > 2$, replace $\zeta_i$ with $\zeta_i - \frac{\Tr(\zeta_i/\zeta_2)}{m}\zeta_2$.  
Proceeding in this fashion, we obtain a basis $\zeta_1,\ldots,\zeta_n$ of $\Q(\lambda)$ over $\Q$ with the desirable property that $\Tr(\zeta_i/\zeta_j) = $ if $i > j$.  

The statement that the entries of $v$ lie in $\Q(\lambda)$ means that there exist vectors $v_1,\ldots,v_n$ with rational entries such that $v = \zeta_1v_1 + \cdots + \zeta_nv_n$.  
The statement that the entries of $v$ span $\Q(\lambda)$ over $\Q$ means that $v_1,\ldots,v_n$ are linearly independent over $\Q$.  
For any automorphism $\alpha$ of $L$ fixing $\Q$, the vector $\alpha (\frac{1}{\zeta_j}v)$ is an $\alpha(\lambda)$-eigenvector of $A$, and hence lies in $V$.  
The sum of these vectors over all such automorphisms $\alpha$ for a fixed $j$ is 
\begin{equation}\label{EQ:field-trace}
\sum_\alpha \alpha\big(\frac{1}{\zeta_j}v\big) = \Tr(\zeta_1/\zeta_j)v_1 + \cdots + \Tr(\zeta_n/\zeta_j)v_n \in \spa_\Q\{v_1,\ldots,v_j\};
\end{equation}
moreover, the $v_j$-coefficient of this vector is $\Tr(\zeta_j/\zeta_j) = \Tr(1) = m$, which is non-zero.  
This is sufficient to show that the $n$ rational vectors $v_1,\ldots,v_n$ lie in $V$, and hence must span it (over $L$).  

Let $F$ = $V\cap \Z^k$.  
$F$ is necessarily $A$-invariant as $V$ and $\Z^k$ are $A$-invariant, and $\Z^k/F$ is free abelian by the definition of $F$.  
Moreover, $F$ has rank $n$ as it contains non-zero multiples of each of $v_1,\ldots, v_n$.  

To see that left multiplication by $w$ is injective on $F$, pick some element $y\in F$ and suppose $wy = 0$.  
Let $\alpha_1,\ldots,\alpha_n$ be automorphisms of $L$ fixing $\Q$ and sending $\lambda$ to each of its $n$ algebraic conjugates, and suppose $\alpha_1$ fixes $\lambda$.  
$y$ can be expressed uniquely as an $L$-linear combination of $\alpha_1(v),\ldots, \alpha_n(v)$, and for $i > 1$ $\alpha_i(v)$ is a right eigenvector of $A$ associated to $\alpha_i(\lambda)\neq \lambda$, so $w\alpha_i(v) = 0$ for $i > 1$.  

Thus the statement that $wy = 0$ is equivalent to saying that the coefficient of $v = \alpha_1(v)$ in the expansion of $y$ with respect to $\alpha_1(v),\ldots, \alpha_n(v)$ is $0$.  
But inspection of Equation \ref{EQ:field-trace} reveals this to be impossible.  
This is because we can write $y = c_1v_1+\cdots c_nv_n$ as a rational combination of the rational vectors $v_i$, and each $v_i$ can in turn be written as an $L$-linear combination of $\{ \alpha_1(v),\ldots,\alpha_n(v)\}$.  
The coefficients of $v=\alpha_1(v)$ in these combinations can in turn be expressed as rational combinations of the basis $\{ \frac{1}{\zeta_1},\ldots,\frac{1}{\zeta_n}\}$ of $L$ over $\Q$.  
Equation \ref{EQ:field-trace} implies that, for $v_i$, this coefficient of $\alpha_1(v)$ uses a non-zero multiple of $\frac{1}{\zeta_i}$ and zero multiples of $\frac{1}{\zeta_j}$ for $j>i$, which is sufficient to prove that the coefficient of $\alpha_1(v)$ in $y$ is non-zero.  
\end{proof}

Now let us suppose that $G$ is simple and stationary, and use this to find a description of $G$; this discussion will culminate in the statement and proof of Theorem \ref{THM:description}.  

Let $A$ be a $k\times k$ integer matrix, some positive integer power of which has strictly positive entries.  
Form the dimension group $G = \lim A : \Z^{k} \to \Z^{k}$.  
Let $\lambda$ denote the Perron--Frobenius eigenvalue of $A$, let $n$ denote its algebraic degree over $\Q$, and let $w$ and $v$ denote left and right Perron--Frobenius eigenvectors of it, respectively, chosen with positive entries.  
Let $\tau : G\to \R$ denote the trace on $G$ given by $w$, as described in Section \ref{SEC:notation}.  
Let us now describe the structure of $G$ in terms of subgroups that arise from $w$.  

If $\lambda = 1$, this structure is particularly easy to describe.  
Let $\mu$ be another eigenvalue of $A$; then $\mu$ satisfies the characteristic polynomial of $A$, and hence is an algebraic integer.  
Then the minimal polynomial of $\mu$ over $\Q$ is a monic integer polynomial dividing the characteristic polynomial of $A$.  
The constant coefficient of this minimal polynomial is the product of all the algebraic conjugates of $\mu$, which are also eigenvalues of $A$; assuming $1$ to be the Perron--Frobenius eigenvalue, these conjugates must all be less than $1$ in modulus, and so their product must also be less than $1$ in modulus.  
Since this constant coefficient is an integer, it must be $0$, which implies that $\mu= 0$.  
Then in this case it is not hard to see that $G$ is cyclic.  

Now suppose $\lambda  > 1$.  
Define a subgroup $L\subset \Z^k$ by $L = w^\perp := \{ x\in\Z^k : wx = 0\}$.  
Then $L$ is free abelian and $AL\subset L$.  
By Lemma \ref{LEM:eigenvector-entries}, the rank of $L$ is $k-n$.  

Define a subgroup $K\subset G$ by $K := \{ [g,i] : g\in L\}$.  
$K$ is indeed a subgroup because $AL\subset L$.  
Then $K$ is the inductive limit of the following stationary sequence in the category of (unordered) torsion-free abelian groups.  

\begin{equation*}
\xymatrix{
L \ar@{>}^{A|_L}[r] & L \ar@{>}^{A|_L}[r] & L \ar@{>}^{A|_L}[r] & \cdots
}
\end{equation*}

$K$ has finite rank---let us denote it by $r$---and is isomorphic to a subgroup of $\Q^r$.  
Let $\Phi : K \to \Q^r$ denote an embedding of $K$ as a subgroup of $\Q^r$.  

Of course $K$ is the kernel of the trace on $G$.  
The short exact sequence associated with the trace need not split, so it is not necessarily possible to realize the image of the trace as a summand complementing $K$; nevertheless, it is possible to find a copy of the image of the trace sitting inside $G$ as an order subgroup.  
To do this, let us use the $A$-invariant subgroup $F\subset \Z^k$ given by Lemma \ref{LEM:quotient-complement}.  

Define a subgroup $H\subset G$ by $H := \{ [g,i] : g\in F\}$; as a group this is the inductive limit in the category of torsion-free abelian groups of the stationary sequence $A|_F:F\to F$.  
By \cite[Theorem 2.2]{H:irrational} there exists a basis of $F$ with respect to which the matrix representing the group homomorphism $A|_F$ is primitive; this yields an order structure on $H$ making it a stationary simple dimension group.  
But this order structure agrees with the order that $H$ inherits as a subgroup of $G$ because both of these order structures are determined by traces induced by multiplication with left Perron--Frobenius eigenvectors---one an eigenvector of $A|_F$ and the other of $A$---and these are the same homomorphisms on $F$.  
Therefore $H$ is a simple stationary ordered group under the order structure that it inherits as a subgroup of $G$.  
Moreover left multiplication by $w$ is injective on $F$, so $\tau$ is injective on $H$ and $H$ is order isomorphic to $\tau(H)$.  

The fact that left multiplication by $w$ is injective on $F$ also means that $F\cap L = \{ 0\}$; combined with the fact that $F$ has rank $n$ and $L$ has rank $k-n$, this implies that $F+L$ is a sublattice of $\Z^k$ of full rank.  
This means that for every $g\in G$ there is an integer $m$ such that $mg\in H+K$.  
Thus the embeddings $\Phi$ and $\tau$ can be combined and extended by linearity to produce an embedding $\Theta : G \to \R\oplus \Q^r$ defined as follows: If $g\in G$ satisfies $mg = h + k$ for $h\in H, k\in K$, then $\Theta(g) = (\frac{1}{m}\tau(h),\frac{1}{m}\Phi(k))$.  

Let us now show that $G$ is generated by $H$, $K$, and a finite number of extra elements.  
To do this, choose elements $z_1,\ldots, z_s\in\Z^k$ such that $\Z^k = \langle z_1,\ldots, z_s,L,F\rangle$; then $[z_1,1],\ldots,[z_s,1]$ are the required extra generators of $G$.  
To see this, consider the subgroup $L_i$ of $\Z^k$ defined by $L_i := \langle A^i(\Z^k) \cup L \cup F \rangle$.  
$L_i$ is a sublattice of $\Z^k$ of full rank.  
$L_1\supset L_2\supset L_3 \supset \cdots$, and this sequence must eventually stabilize, as each $L_i$ contains $\langle L\cup F\rangle$, so the modulus of the determinant of $L_i$ is bounded above by the modulus of the determinant of that lattice.  
Thus there exists $N\in\N$ such that $L_N = L_{N+i}$ for all $i\in \N$.  

Then pick $[g,i]\in G$.  
Note that $A^Ng\in L_N = L_{N+i-1}$, so $A^Ng = A^{N+i-1}z + x + y$ for some $z\in \Z^k, x\in L$, and $y\in F$.  
Thus
\begin{align*}
[g,i] & = [A^Ng,N+i] \\
& = [A^{N+i-1}z + x + y, N+i] \\
& = [A^{N+i-1}z,N+i] + [x,N+i] + [y,N+i] \\
& = [z,1] + [x,N+i] + [y,N+i].
\end{align*}

The second of these summands lies in $K$ and the third lies in $H$.  
The first of these summands lies in the first copy of $\Z^k$ in the inductive sequence, and hence is a combination of elements from $\{ [t,1] : t\in L\} \subset K$, $\{ [u,1] : u\in F\}\subset H$, and $\{ [z_1,1], \ldots,[z_s,1]\}$.  
Therefore $G$ is generated by $H, K$, and $\{ [z_1,1], \ldots, [z_s,1]\}$.  

It is clear that the number $s$ of extra generators can be taken to be at most $k$, but in fact we can do better.  
$\Z^k/L$ is free, so the exact sequence $L\to \Z^k \to \Z^k/L$ splits, and any basis of $L$ can be extended to a basis of $\Z^k$; the $n$ extra elements used to extend this basis can be taken to be the extra generators.  
Thus $s$ can be taken to be $n = \rank (F) = \rank (H)$.  

Likewise $\Z^k/F$ is free, so we could take $\{ z_1,\ldots, z_s\}$ to be a set of elements used to extend a basis of $F$ to a basis of $\Z^k$.  
Therefore $s$ can be taken to be at most $k-n = \rank(L)$.  
But we can do still better.  
By passing to a sufficiently high power of $A$, we can guarantee that $\ker(A|_L^2) = \ker(A|_L)$, and then $r = \rank(K) = \rank(A|_L)$.  
Then $L/\ker (A|_L)$ is free, so any basis of $\langle F\cup \ker(A|_L)\rangle$ can be extended to a basis of $\Z^k$; the extra elements used to extend this basis can be used as $\{ [z_1,1],\ldots, [z_s,1]\}$, and we need use at most $r = \rank(K)$ such extra elements.  
Therefore the number $s$ of extra elements used to generate $G$ can be taken to be at most the minimum of $\rank(H)$ and $\rank(K)$.  

As described in Section \ref{SEC:notation}, every trace on $G$ is a positive multiple of the homomorphism $[g,i] \mapsto \lambda^{-i}wg$.  
The pullback of such a trace under the embedding $\Theta : G \to \R\oplus \Q^{r}$ is a positive scalar multiple of the projection map on the first coordinate, because $wx = 0$ for all $x\in L$.  
Moreover, an element $[g,i]\in G$ is positive if and only if $wg > 0$ or $[g,i] = 0\in G$, so if $\R\oplus\Q^{r}$ is ordered by the first coordinate---i.e., $q = (x,q_1,\ldots,q_r) \in \R\oplus\Q^{r}$ is positive if and only if $x>0$ or $q = 0$---then $\Theta$ is an order embedding.  

This discussion proves one direction of the following theorem, which is the main result.  

\begin{thm}\label{THM:description}
Let $G$ be a non-cyclic simple dimension group.  
Then $G$ is stationary if and only if it is order isomorphic to a subgroup of $\R \oplus \Q^{r}$ ordered by the first coordinate and generated by the following:  
\begin{enumerate}
\item  a non-cyclic stationary order subgroup $H\subset \R\oplus 0^r$;  
\item  a rank-$r$ subgroup $K \subset \{ (0,q_1,\ldots,q_r) \in\R\oplus \Q^{r} \}$ that is stationary in the category of unordered torsion-free abelian groups; and
\item  a finite set $\{ z_1,\ldots, z_s\}\subset (H + K)\otimes \Q\subset \R \oplus \Q^r$.  
\end{enumerate}
Moreover, the number $s$ of extra generators can be taken to be less than or equal to the minimum of the ranks of $H$ and $K$.  
\end{thm}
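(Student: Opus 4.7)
The ``only if'' direction is established by the construction immediately preceding the theorem statement. For the converse, my plan is to verify the two hypotheses of Proposition~\ref{PROP:characterization}: that $G$ has a unique trace up to positive scalar, and that there exist an order automorphism $\alpha$ of $G$ and a finite set $F \subset G^+$ satisfying the increasing monoid condition. The trace condition is immediate, since $G \subset \R \oplus \Q^r$ is ordered by the first coordinate, so projection on the first coordinate is the unique trace up to positive scalar, as noted in Section~\ref{SEC:notation}.

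The construction of $\alpha$ is the core of the argument. Stationarity of $H$ as an ordered group yields, via Proposition~\ref{PROP:characterization}, an order automorphism $\alpha_H$ of $H$ and a finite $F_H \subset H^+$ satisfying the increasing monoid condition; stationarity of $K$ as an unordered torsion-free abelian group yields, via Proposition~\ref{PROP:unordered}, an automorphism $\alpha_K$ of $K$ and a finite $F_K \subset K$ satisfying the increasing subgroup condition. Combine these into a $\Q$-linear automorphism $\tilde\alpha := \alpha_H \oplus \alpha_K$ of $V := (H+K) \otimes \Q$, which preserves $H$, $K$, and hence $H+K$. The obstacle is that $\tilde\alpha$ need not preserve $G$: the image $\tilde\alpha(z_i)$ may fail to lie in $\langle H, K, z_1, \ldots, z_s\rangle$.

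This is where the matrix-power observation advertised in the abstract enters. Each $z_i \in V$ satisfies $N_0 z_i \in H+K$ for a common positive integer $N_0$, so the cosets $z_i + (H+K)$ lie in the $N_0$-torsion subgroup of $V/(H+K)$, which is a \emph{finite} abelian group preserved by the automorphism induced by $\tilde\alpha$. Some positive power $\tilde\alpha^N$ therefore acts trivially on this finite subgroup, so $\tilde\alpha^N(z_i) - z_i \in H+K \subset G$ for every $i$, and $\alpha := \tilde\alpha^N$ is then an automorphism of $G$. The order is preserved because $\alpha$ scales the first coordinate by $\lambda^N > 1$, where $\lambda$ is the Perron--Frobenius eigenvalue of the stationary realization of $H$. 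I expect this step to be the main technical hurdle.

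For the positive set $F$, pick a strictly positive $h_0 \in H^+$ with first coordinate large enough that $h_0 \pm k$ and $h_0 \pm z_i$ all lie in $G^+$ for every $k \in F_K$ and every $i$, and take $F := F_H \cup \{h_0 \pm k : k \in F_K\} \cup \{h_0 \pm z_i : 1 \le i \le s\}$ (enlarging $F_H$ to $\bigcup_{i=0}^{N-1}\alpha_H^i(F_H)$ if necessary, so that iterates of $\alpha = \tilde\alpha^N$ still exhaust $H^+$ on the $H$-component). Any $g \in G^+$ has some positive multiple in $H + K$ by the torsion argument above; expressing that multiple via the increasing monoid condition for $H$ and the increasing subgroup condition for $K$, and absorbing the $\pm$ sign ambiguity on $K$- and $z_i$-contributions by dominating with sufficiently many copies of $\alpha^n(h_0) \in H^+$, realizes $g$ as a monoid combination of $\alpha^n(F)$ for large $n$. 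The bound $s \le \min(\rank H, \rank K)$ is established in the ``only if'' discussion above and requires no further work here.
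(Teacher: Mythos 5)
Your reduction to Proposition \ref{PROP:characterization} is the right frame, and your device for making the automorphism preserve $G$ --- observing that the cosets $z_i + (H+K)$ lie in the finite group $\frac{1}{N_0}(H+K)/(H+K)$, on which the induced automorphism has finite order --- is a clean substitute for the paper's route through Corollary \ref{COR:matrix-power-difference} (which is proved via symmetric polynomials and Cayley--Hamilton). But there is a fatal gap exactly where the paper's real work lies: you take the \emph{same} power $N$ of $\alpha_H$ and $\alpha_K$ in $\alpha = \alpha_H^N \oplus \alpha_K^N$, and with that choice the increasing monoid condition generally fails. Concretely, to write $g = h + k \in (H+K)^+$ as a non-negative combination of $\alpha^n(F)$, the $K$-component forces you to express $k$ in terms of $\alpha_K^{nN}(F_K)$; if $B$ is the transition matrix of $\alpha_K$, these integer coefficients grow on the order of $\rho^{nN}$, where $\rho$ is controlled by the entries of $B$. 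Each generator $\alpha^n(h_0 \pm k_j)$ carries first coordinate $\lambda^{-nN}\tau(h_0)$, so absorbing those coefficients costs at least order $(\rho/\lambda)^{nN}\tau(h_0)$ in the first coordinate, which must not exceed the fixed number $\tau(h)$. Since $H$ and $K$ are prescribed independently, nothing prevents $\rho > \lambda$: take $H = \Z[1/2]$ and $K = \Z[1/1000]$, where expressing $(h,1)$ requires $1000^n$ copies of $\alpha^n(h_0+k_1)$, contributing $500^n\tau(h_0)$ to the first coordinate. This is precisely why the paper uses $\gamma(h+k) = \alpha_H^{l_1}(h) + \alpha_K^{l_2}(k)$ with $l_1, l_2$ chosen \emph{independently} so that $\lambda^{l_1} > (rL)^{l_2}$ (Inequality \ref{EQ:l-ratio}), and why Lemma \ref{LEM:automorphism-extends} is formulated so that $l_1$ may be enlarged without constraint from $l_2$. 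Your ``dominate with sufficiently many copies of $\alpha^n(h_0)$'' step silently assumes the quantitative balance that has to be engineered; your torsion argument could be adapted to supply the needed flexibility (the automorphisms induced by $\alpha_H\oplus\mathrm{id}$ and $\mathrm{id}\oplus\alpha_K$ on the finite quotient commute and each has finite order), but as written the construction does not work.

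A secondary misstep: you propose to pass to a positive multiple of $g$ lying in $H+K$ and express that multiple via the monoid and subgroup conditions, but monoids are not divisible, so $mg \in \Mon(\alpha^n(F))$ does not yield $g \in \Mon(\alpha^n(F))$. The paper instead writes $g = \sum a_i z_i + h + k$ directly with $a_i \ge 0$ (adjusting by elements of $H+K$), puts the braces themselves into $F$, and uses the identity $z_i = \gamma^q(z_i) + g_q$ with $g_q \in (H+K)^+$ to reduce to the $H+K$ case, with a uniform-in-$q$ bound (Lemma \ref{LEM:braces-in-monoid}) to keep the required power of $\gamma$ under control.
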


\begin{defn}\label{DEF:brace}
In the setting of Theorem \ref{THM:description}, let us refer to the elements $z_1,\ldots, z_s$ as \defemph{braces}.  % if it does not lie in the subgroup $H + K$.  
\end{defn}

The proof of the ``if'' part of Theorem \ref{THM:description} uses Corollary \ref{COR:matrix-power-difference}, which is a consequence of Proposition \ref{PROP:polynomials}, below.  
The proof of Proposition \ref{PROP:polynomials} uses the following lemma, which involves a brief excursion into the world of symmetric polynomials.  

\begin{lemma}\label{LEM:symmetric-polynomials}
Let $f(x) = (x-z_1)\cdots (x-z_n)$ be a polynomial with complex roots $z_1,\ldots, z_n$, not necessarily distinct, and for $l\in\Z^+$ let $f^l(x)$ denote the polynomial $(x-z_1^l)\cdots (x-z_n^l)$.  
Let $e_{j,l}$ denote the coefficient of $x^{n-j}$ in $f^l(x)$ (so that $e_{j,1}$ is the coefficient of $x^{n-j}$ in $f(x)$).  
Suppose that each $e_{j,1}\in\Z$.  
Then
\begin{enumerate}
\item  each $e_{j,l}\in\Z$; and
\item  if $p$ is an integer prime such that $p | e_{j,1}$ for all $j > j_0$, then, for all $l>1$, $p|e_{j_0,l}$ if and only if $p|e_{j_0,1}$.  
\end{enumerate}
\end{lemma}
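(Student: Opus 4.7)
The plan is to reinterpret $e_{j,l}$ as (up to sign) the $j$-th elementary symmetric polynomial in $z_1^l,\ldots,z_n^l$, and then attack (1) via the fundamental theorem of symmetric polynomials and (2) via a reduction-modulo-$p$ argument. Vieta's formulas applied to $f^l(x) = \prod_i(x - z_i^l)$ give $e_{j,l} = (-1)^j\sigma_j(z_1^l,\ldots,z_n^l)$.

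For (1), $\sigma_j(z_1^l,\ldots,z_n^l)$ is symmetric in $z_1,\ldots,z_n$, so the fundamental theorem of symmetric polynomials writes it as an integer polynomial in the elementary symmetric functions $\sigma_1(z_1,\ldots,z_n),\ldots,\sigma_n(z_1,\ldots,z_n)$. These values equal $(-1)^k e_{k,1}\in\Z$ by hypothesis, hence $e_{j,l}\in\Z$. This argument also produces explicit polynomials $P_j\in\Z[a_1,\ldots,a_n]$ with $e_{j,l} = P_j(e_{1,1},\ldots,e_{n,1})$, which will be exploited in (2).

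For (2), the key consequence of the formula $e_{j,l} = P_j(e_{1,1},\ldots,e_{n,1})$ is that the operation $f\mapsto f^l$ commutes with reduction modulo $p$. Writing $\bar f\in\mathbb{F}_p[x]$ for the reduction of $f$, the hypothesis $p\mid e_{j,1}$ for $j>j_0$ means
\[
\bar f(x) = x^{n-j_0}\bar g(x),
\]
with $\bar g\in\mathbb{F}_p[x]$ monic of degree $j_0$ and constant term congruent to $e_{j_0,1}$ modulo $p$. Thus over $\overline{\mathbb{F}_p}$, $\bar f$ has $n-j_0$ roots equal to $0$ together with the $j_0$ roots $\alpha_1,\ldots,\alpha_{j_0}$ of $\bar g$, and the constant-term identity gives $e_{j_0,1}\equiv(-1)^{j_0}\beta\pmod p$ for $\beta:=\prod_{i=1}^{j_0}\alpha_i$. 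By the same commutativity, $\bar f^l$ has roots $0,\ldots,0$ (still $n-j_0$ copies) together with $\alpha_1^l,\ldots,\alpha_{j_0}^l$; in computing $\sigma_{j_0}$ of these $n$ numbers, every $j_0$-element subset that omits some $\alpha_i^l$ must include a zero root and thus contributes nothing, so the only surviving term is $\prod_{i=1}^{j_0}\alpha_i^l = \beta^l$. Hence $e_{j_0,l}\equiv(-1)^{j_0}\beta^l\pmod p$, and combining with the previous congruence yields $e_{j_0,l}\equiv(-1)^{j_0(l+1)}e_{j_0,1}^l\pmod p$; since $\mathbb{F}_p$ is an integral domain, $p\mid e_{j_0,l}$ iff $p\mid e_{j_0,1}$.

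The main obstacle I anticipate is articulating cleanly that the operation $f\mapsto f^l$ is intrinsic to the coefficient ring (via the $P_j$) and therefore survives reduction modulo $p$. Once this compatibility is pinned down, the factorization $\bar f = x^{n-j_0}\bar g$ and the vanishing of the $\sigma_{j_0}$ terms that include a zero root are straightforward combinatorial observations in a domain.
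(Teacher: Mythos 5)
Your proof is correct. Part (1) is identical to the paper's argument. For part (2) the underlying idea is the same---annihilate the roots $z_j$ with $j>j_0$ and watch $\sigma_{j_0}$ of the $l$-th powers collapse to the $l$-th power of the product of the surviving roots---but your formalization is genuinely different. The paper never leaves characteristic zero: it works in the universal ring $\Z[z_1,\ldots,z_n]$, applies the evaluation homomorphism $z_j\mapsto 0$ for $j>j_0$, and uses the algebraic independence of the elementary symmetric polynomials to identify the kernel of that map as the span of monomials divisible by some $e_{j,1}$ with $j>j_0$; divisibility by $p$ then follows because $p$ divides each such $e_{j,1}$. You instead reduce mod $p$ and work with actual roots in $\overline{\mathbb{F}_p}$, where the hypothesis literally forces $n-j_0$ roots to vanish. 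Your route avoids the kernel computation, but it does hinge on the compatibility $\overline{f^l}=(\bar f)^l$, which you correctly flag; to nail it down, note that the identity $\sigma_j(z_1^l,\ldots,z_n^l)=P_j(\sigma_1,\ldots,\sigma_n)$ from part (1) is a polynomial identity over $\Z$ and hence holds over every commutative ring, in particular over $\overline{\mathbb{F}_p}$, so both sides of the comparison compute the same element $P_j(\bar e_{1,1},\ldots,\bar e_{n,1})$ up to the sign conventions. A minor bonus of your version: you track the sign $(-1)^{j_0(l+1)}$ explicitly, whereas the paper's assertion that $\phi(e_{j_0,l}-e_{j_0,1}^l)=0$ is only correct up to a sign when $j_0$ is odd and $l$ is even; the sign is of course irrelevant to divisibility by $p$, so both conclusions stand.
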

\begin{proof}
View the complex roots $z_i$ as variables, and consider the ring of symmetric polynomials in $z_1,\ldots, z_n$, that is, the subring $\Lambda \subset \Z[z_1,\ldots,$ $z_n]$ consisting of all integer polynomials that are invariant under permutations of $\{z_i\}_{i=1}^n$.  
The coefficients of $f(x)$ are precisely the \defemph{elementary symmetric polynomials} 
\begin{equation*}
e_{j,1} = \sum_{1\leq i_1<i_2<\cdots <i_j\leq n}z_{i_1}z_{i_2}\cdots z_{i_j}.
\end{equation*}

The Fundamental Theorem of Symmetric Polynomials \cite[Theorem 2.4]{M:symmetric-functions} says that $\Lambda = \Z[e_{0,1},\ldots,e_{n,1}]$, and the set $\{e_{j,1}\}_{j=0}^n$ is algebraically independent over $\Z$.  
The coefficients $e_{j,l}$ of $f^l(x)$ lie in $\Lambda$ because $f^l(x)$ itself is invariant under permutation of $\{z_i\}_{i=1}^n$; therefore these coefficients can be expressed as integer polynomials in $\{e_{j,1}\}_{j=0}^n$.  
Thus if each $e_{j,1}$ is an integer, then so is each $e_{j,l}$, proving statement (1).  

To prove statement (2), note that $e_{j_0,l}-e_{j_0,1}^l$ is symmetric, and so can be written as an integer polynomial in $\{e_{j,1}\}_{j=0}^n$.  
Consider the evaluation map $\phi$ on $\Lambda = \Z[e_{0,1},\ldots,e_{n,1}]$ obtained by setting $z_j = 0$ for all $j > j_0$.  
This is a unital ring homomorphism $\phi : \Z[e_{0,1},\ldots,e_{n,1}] \to \Z[\epsilon_{0,1},\ldots,\epsilon_{j_0,1}]$, where $\epsilon_{j,1}$ is the $j$th elementary symmetric polynomial in the variables $\{z_i\}_{i=1}^{j_0}$.  
$\phi(e_{j,1}) = \epsilon_{j,1}$ for $j\leq j_0$, and $\phi(e_{j,1}) = 0$ for $j>j_0$.  
As the elementary symmetric polynomials are algebraically independent over $\Z$, the kernel of $\phi$ consists exactly of the $\Z$-submodule of $\Z[e_{0,1},\ldots,e_{n,1}]$ spanned by the monomials that are divisible by at least one $e_{j,1}$ with $j>j_0$.  
The observation that $\phi(e_{j_0,l}-e_{j_0,1}^l) = $ $z_1^lz_2^l\cdots z_{j_0}^l - (z_1z_2\cdots z_{j_0})^l = 0$ is then sufficient to prove statement (2).  
\end{proof}

\begin{prop}\label{PROP:polynomials}
Let $f(x)\in\Z[x]$ be a monic polynomial and let $m \geq 2$ be an integer.  
Then there exist $g(x), r(x) \in\Z[x]$ and integers $k > l \geq 0$ such that
\[
f(x)g(x) + mr(x) = x^k - x^l.
\]
\end{prop}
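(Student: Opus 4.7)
The plan is to reduce the statement to a pigeonhole argument in a finite quotient ring. Set
\[
R := \Z[x]/(m, f(x)).
\]
Since $f$ is monic of some degree $d$, its reduction modulo $m$ is still monic of degree $d$ in $(\Z/m\Z)[x]$, so polynomial division by $f$ modulo $m$ is possible; consequently $R$ is generated as a $\Z/m\Z$-module by $\{1, x, x^2, \ldots, x^{d-1}\}$ and is therefore a finite ring of size at most $m^d$.

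The main step is then to apply pigeonhole to the sequence of powers $x^0, x^1, x^2, \ldots$ in $R$: since $R$ has only finitely many elements, two of these powers must coincide, yielding integers $k > l \geq 0$ with $x^k = x^l$ in $R$. Unwinding the definition of $R$, this equality says precisely that $x^k - x^l \in (m, f(x)) \subset \Z[x]$, and so one can write
\[
x^k - x^l = f(x)g(x) + m \cdot r(x)
\]
for some $g(x), r(x) \in \Z[x]$, which is the desired conclusion.

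The only subtle point is the monic hypothesis on $f$: it is exactly what guarantees that $R$ is finite. If the leading coefficient of $f$ were divisible by $m$, high powers of $x$ could not be reduced modulo the mod-$m$ reduction of $f$, and $R$ would have infinite order. There is no genuine obstacle in this approach, and Lemma \ref{LEM:symmetric-polynomials} is not needed; presumably the lemma is invoked in the paper to give more quantitative control over the admissible pairs $(k, l)$ (perhaps towards the Corollary \ref{COR:matrix-power-difference} about matrix powers), but for pure existence, pigeonhole alone suffices. Indeed, the matrix-power statement quoted in the abstract is essentially the same pigeonhole argument applied in $M_d(\Z/m\Z)$, which is finite of order $m^{d^2}$.
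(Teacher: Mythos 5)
Your proof is correct, and it is genuinely simpler than the one in the paper. The paper also works in the finite ring $R=(\Z/m\Z)[x]/\langle \bar f(x)\rangle$, but only exploits finiteness of the unit group $R^\times$: it first treats the case $\gcd(f(0),m)=1$, where $\bar x$ is a unit and hence $\bar x^{\,k}=\bar 1$ for some $k$ (giving $l=0$), and then spends most of the proof reducing the general case to this one by an induction that repeatedly replaces $f$ by a related polynomial $f_{i+1}$ built from $(x-m')f_i^{l_i}(x)$, with Lemma \ref{LEM:symmetric-polynomials} controlling divisibility of the coefficients along the way. You observe, correctly, that none of this is needed for bare existence: the sequence $\bar x^{\,0},\bar x^{\,1},\bar x^{\,2},\dots$ lives in the finite set $R$ (of size at most $m^{\deg f}$, by monicity), so two powers coincide by pigeonhole, and $\bar x^{\,k}=\bar x^{\,l}$ in $R$ unwinds exactly to $x^k-x^l\in\langle m, f(x)\rangle\subset\Z[x]$, which is the desired identity. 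Your remark that Corollary \ref{COR:matrix-power-difference} also follows from pigeonhole applied directly to powers of $B$ in the finite monoid $M_d(\Z/m\Z)$ is likewise correct, and for the way the corollary is used later (only the difference $k-l$ matters in Lemma \ref{LEM:automorphism-extends}), nothing is lost. What the paper's longer route buys is only the sharper conclusion $l=0$ in the case $\gcd(f(0),m)=1$, i.e.\ $x^k\equiv 1$; since that refinement is not used anywhere downstream, your argument is a clean replacement, and it also makes the dependence on Lemma \ref{LEM:symmetric-polynomials} unnecessary.
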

\begin{proof}
Let $f(x) = x^n + a_1x^{n-1} + \cdots + a_{n-1}x+a_n$, and let us first prove the claim for the case when $(a_n,m) = 1$.  

Rearrange the formula for $f(x)$ to obtain
\begin{equation}\label{EQ:units}
xs(x) + f(x) = a_n
\end{equation}
for some $s(x)\in\Z[x]$.  

Consider the two natural unital quotient homomorphisms $q_1: \Z[x]\to (\Z/m\Z)[x]$ and $q_2: (\Z/m\Z)[x]\to R = (\Z/m\Z)[x]/\langle q_1(f(x))\rangle$, and for $h(x)\in\Z[x]$ let $\overline{h(x)}$ denote $q_2(q_1(h(x))$.  
The assumption that $(a_n, m) = 1$ implies that $a_n + m\Z$ is a unit in the finite ring $\Z/m\Z$, and hence $q_1(a_n)\in (\Z/m\Z)[x]$ is also a unit, and hence so is $\overline{a_n}$.  
Then Equation \ref{EQ:units} implies that $\overline{x}\overline{s(x)} = \overline{a_n} \in R^\times$, and so $\overline{x}\in R^\times$.  

But $R$ is a finite ring because $f(x)$ is monic: any polynomial in $(\Z/m\Z)[x]$ is equivalent modulo $q_1(f(x))$ to a polynomial of degree less than $n$, and the coefficient ring $(\Z/m\Z)$ is finite.  
Thus $R^\times$ is a finite group, and hence there exists $k\geq 0$ such that $\overline{x}^k = \overline{1}$.  
Lifting to a pre-image of $\overline{x}^k-\overline{1}$ in $\Z[x]$ yields the desired result with $l = 0$.  

Now drop the assumption that $(a_n,m)=1$, and let $j_0$ be maximal with the property that $(a_{j_0},a_{j_0+1},\ldots,a_n,m) = 1$.  
We know that $j_0\geq 0$ because $a_0 = 1$.  
Moreover, the argument above shows that the claim is true for degree-$n$ polynomials $f(x)$ for which $j_0 = n$.  

Let us show that, given a monic degree-$n$ polynomial $f_i(x)$ with $j_0 = j_0(i) \geq i$, there exist $g_i(x)\in\Z[x]$, $c_i,l_i\in\Z$ with $l_i>0$ such that
\begin{align}\label{EQ:induction}
f_i(x)g_i(x) + mc_i & = x^{l_i}f_{i+1}(x^{l_i}),
\end{align}
where $f_{i+1}(x)$ is a monic degree-$n$ polynomial with $j_0(i+1) = j_0(i) + 1 \geq i+1$.  
Then the constant coefficient of $f_n(x)$ is coprime to $m$, and so by the argument above $f_n(x)$ satisfies the conclusion of the proposition; induction then suffices to show that the conclusion holds for $f(x) = f_0(x)$.  

Note that $f_i(x)|f_i^l(x^l)$ for all $l>0$, where $f_i^l(x)$ is defined as in Lemma \ref{LEM:symmetric-polynomials} (and is an integer polynomial by part (1) of that lemma).  
Let $e_{n,1} = f_i(0)$ and choose $l_i$ large enough that $e_{n,1}^{l_i}$ ``consumes'' all of the powers of the common prime factors of $e_{n,1}$ and $m$; that is, with $m' = m/(e_{n,1}^{l_i},m)$, we have $(e_{n,1}^{l_i},m') = 1$.  

Then $f_i^{l_i}(0) = e_{n,1}^{l_i}$, and the constant coefficient of $(x-m')f_i^{l_i}(x)$ is $-m'e_{n,1}^{l_i}$, which is divisible by $m$.  
Let $c_i = m'e_{n,1}^{l_i}/m\in\Z$, and let 
\begin{align}\label{EQ:next-f}
f_{i+1}(x) & = \frac{(x-m')f_i^{l_i}(x)+m'e_{n,1}^{l_i}}{x}\in\Z[x];
\end{align}
then the statement in Equation \ref{EQ:induction} holds with this choice of $c_i$ and $f_{i+1}(x)$ (and some choice of $g_i(x)\in\Z[x]$).  
It remains to show that $j_0(i+1) = j_0(i) + 1$ (which is at least $i+1$ by induction).  

Let $f_i(x) = x^n+e_{1,1}x^{n-1}+\cdots +e_{n-1,1}x+e_{n,1}$ and let $f_i^{l_i}(x) = x^n+e_{1,l_i}x^{n-1}+\cdots +e_{n-1,l_i}x+e_{n,l_i}$.  
By inspecting Equation \ref{EQ:next-f}, one sees that the coefficient of $x^{n-j}$ in $f_{i+1}(x)$ is $b_j=-m'e_{j-1,l_i}+e_{j,l_i}$.  
By the definition of $j_0(i)$, there is a prime $p$ dividing $m$ and each $e_{j,1}$ for $j>j_0(i)$; by statement (2) of Lemma \ref{LEM:symmetric-polynomials}, $p$ also divides $e_{j,l_i}$ for $j>j_0(i)$.  
Therefore $p|-m'e_{j,l_i}+e_{j+1,l_i}$ for all $j>j_0(i)$; i.e., $p|b_{j+1}$ for all $j>j_0(i)$, which means that $j_0(i+1)\leq j_0(i)+1$.  

Now suppose that $p$ is a prime dividing $(b_{j_0(i)+2},\ldots,b_n,m)$, and let us show that such a prime necessarily divides each $e_{j,l_i}$ with $j>j_0(i)$.  
Indeed, $l_i$ was chosen with the property that any prime $p$ dividing $m$ divides either $m'$ or $e_{n,l_i}$, but not both.  
If $p|m'$, then $p|b_n = -m'e_{n-1,l_i}+e_{n,l_i}$ implies $p|e_{n,l_i}$, which is a contradiction; therefore $p\ndiv m'$ and $p|e_{n,l_i}$.  
But then by induction $p|e_{j,l_i}$ for all $j>j_0(i)$, where the induction step uses the argument that $p|e_{j+1,l_i}$, $p\ndiv m'$, and $p|b_{j+1} = -m'e_{j,l_i}+e_{j+1,l_i}$ together imply that $p|e_{j,l_i}$.  

Finally, using statement (2) of Lemma \ref{LEM:symmetric-polynomials} and the definition of $j_0(i)$, we see that $p\ndiv e_{j_0(i),l_i}$; combined with the statements that $p\ndiv m'$ and $p|e_{j_0(i)+1,l_i}$, this means that $p\ndiv b_{j_0(i)+1}$.  
Thus $j_0(i+1)=j_0(i)+1$.  
\end{proof}

\begin{cor}\label{COR:matrix-power-difference}
Let $B$ be a square integer matrix and let $m\geq 2$ be an integer.  
Then there exist integers $k > l\geq 0$ such that every entry of the matrix $B^k - B^l$ is divisible by $m$.  
\end{cor}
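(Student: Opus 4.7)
The plan is to apply Proposition \ref{PROP:polynomials} to the characteristic polynomial of $B$ and then invoke the Cayley--Hamilton theorem. Specifically, let $f(x) \in \Z[x]$ denote the characteristic polynomial of $B$, which is monic because $B$ is a square integer matrix. By Proposition \ref{PROP:polynomials}, there exist $g(x), r(x) \in \Z[x]$ and integers $k > l \geq 0$ such that
\[
f(x)g(x) + m\, r(x) = x^k - x^l.
\]

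Now I would substitute $B$ for $x$ in this polynomial identity. Since all the polynomials have integer coefficients and $B$ has integer entries, every matrix appearing in the substituted identity has integer entries. By Cayley--Hamilton, $f(B) = 0$, so $f(B) g(B) = 0$, and the identity collapses to
\[
m\, r(B) = B^k - B^l.
\]
Because $r(x) \in \Z[x]$, the matrix $r(B)$ has integer entries, and hence every entry of $B^k - B^l$ is divisible by $m$, as required.

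There is no real obstacle here, since all the work has been done in Proposition \ref{PROP:polynomials}; the only ingredients are the monicity of the characteristic polynomial (which ensures the previous proposition applies) and Cayley--Hamilton (which kills the $f(B)g(B)$ term). The one minor thing to note is that Proposition \ref{PROP:polynomials} requires $f$ to be monic, which is automatic for a characteristic polynomial, so no preliminary reduction is needed.
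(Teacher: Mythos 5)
Your proof is correct and is exactly the paper's argument: apply Proposition \ref{PROP:polynomials} to the (monic) characteristic polynomial of $B$ and use Cayley--Hamilton to kill the $f(B)g(B)$ term, leaving $B^k - B^l = m\,r(B)$ with $r(B)$ an integer matrix.
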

\begin{proof}
Let the characteristic polynomial of $B$ play the role of $f$ in the statement of Proposition \ref{PROP:polynomials} and apply the Cayley--Hamilton Theorem.  
\end{proof}

\begin{cor}\label{COR:fun-with-graphs}
Let $G$ be a finite directed graph and let $m\geq 2$ be an integer.  
Then there exist integers $k>l\geq 0$ such that, for any two vertices $i$ and $j$ of $G$, the number of paths of length $k$ from $i$ to $j$ differs from the number of paths of length $l$ from $i$ to $j$ by a multiple of $m$.  
\end{cor}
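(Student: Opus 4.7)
The plan is to reduce this immediately to the previous corollary by using the adjacency matrix of the graph. Let $B$ be the $|V|\times |V|$ integer matrix whose $(i,j)$ entry is the number of directed edges from vertex $i$ to vertex $j$ in $G$. A standard induction on path length shows that for every nonnegative integer $n$, the entry $(B^n)_{ij}$ equals the number of directed paths of length $n$ from $i$ to $j$, where $B^0 = I$ accounts for the ``empty path'' convention at each vertex.

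Given this, the statement of the corollary is just the entrywise congruence $B^k \equiv B^l \pmod{m}$. This is exactly the conclusion of Corollary \ref{COR:matrix-power-difference} applied to the integer matrix $B$ and the modulus $m \geq 2$: there exist integers $k > l \geq 0$ such that every entry of $B^k - B^l$ is divisible by $m$. Translating back via the path-counting interpretation of $B^n$ yields the desired statement.

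There is essentially no obstacle here beyond invoking Corollary \ref{COR:matrix-power-difference}; the only thing to note is the harmless convention that a ``path of length $0$'' from $i$ to $j$ exists precisely when $i=j$, which matches $B^0 = I$, so one should allow $l=0$ in the statement. If instead one insists on $l \geq 1$, the same argument applies to the matrix $B$ after observing that once one has $B^k \equiv B^l \pmod m$ with $k > l \geq 0$, multiplying both sides by $B$ gives $B^{k+1} \equiv B^{l+1} \pmod m$, so the pair $(k+1, l+1)$ works with $l+1 \geq 1$.
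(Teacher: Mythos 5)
Your proof is correct and is exactly the intended argument: the paper leaves this corollary without an explicit proof precisely because it follows immediately from Corollary \ref{COR:matrix-power-difference} applied to the adjacency matrix, whose powers count paths. Nothing further is needed.
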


Now let us give a proof of the ``if'' part of Theorem \ref{THM:description}.  
This proof is broken into a sequence of lemmas for readability.  

Let $Z$ denote the set of braces $\{ z_1,\ldots,z_s\}$, and throughout the proof let us use the symbol $z$ to denote an element of $Z$.  

Let us assume that $G$ has the form described in Theorem \ref{THM:description}, and let us produce an order automorphism $\gamma:G\to G$ and a finite set $F\subset G^+$ that satisfy the increasing monoid condition; Proposition \ref{PROP:characterization} will then suffice to show that $G$ is stationary.  
$\gamma$ and $F$ will be constructed from an automorphism $\alpha$ and finite subset $F_1$ of $H$ satisfying the increasing monoid condition---we know by Proposition \ref{PROP:characterization} that these exist---and an automorphism $\beta$ and finite subset $F_2$ of $K$ that satisfy the increasing subgroup condition---we know by Proposition \ref{PROP:unordered} that these exist.  

The order automorphism $\gamma$ will have the following form on the subgroup $H+K$: $\gamma(h+k) = \alpha^{l_1}(h)+\beta^{l_2}(k)$ for $h\in H, k\in K$, with $l_1,l_2\in\N$.  
Such a formula has a unique extension by linearity to an order automorphism of $\R\oplus \Q^{r}$ (ordered by the first coordinate).  
Note that this order automorphism restricts to automorphisms of $H$ and $K$; to show that it restricts to an order automorphism of $G$, it is necessary and sufficient to show that, with the proper choice of $l_1$ and $l_2$, the elements $\gamma(z),\gamma^{-1}(z)$ lie in $G$, where $z$ ranges over the set $Z$ of braces.  
Lemma \ref{LEM:automorphism-extends}, below, is a slightly stronger statement, which we will anyway need later.  

\begin{lemma}\label{LEM:automorphism-extends}
Let $z\in (H + K)\otimes \Q$.  
There exist $c_1,c_2\in\N$ such that, for any $a,b\in\N$, the extension to $(H+K)\otimes\Q$ of the group homomorphism $\gamma$ defined on $H+K$ by $\gamma(h+k) = \alpha^{ac_1}(h) + \beta^{bc_2}(k)$ satisfies
\[
\gamma(z) - z, \gamma^{-1}(z) - z \in H+K.
\]
\end{lemma}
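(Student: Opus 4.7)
The plan is to reduce the statement to a finiteness argument on the quotients $H/NH$ and $K/NK$. Since $z\in (H+K)\otimes \Q$, choose $N\in\N$ with $Nz\in H+K$; because $H\subset \R\oplus 0^r$ and $K\subset 0\oplus \Q^r$ meet only at $0$, the sum is direct, so I can write $Nz = h_0 + k_0$ uniquely with $h_0\in H$, $k_0\in K$. Observe that $\gamma(z)-z$ lies in $H+K$ if and only if $N(\gamma(z)-z) = \gamma(Nz) - Nz$ lies in $NH\oplus NK$, which in turn splits into the two congruences $\alpha^{ac_1}(h_0)\equiv h_0 \pmod{NH}$ and $\beta^{bc_2}(k_0)\equiv k_0 \pmod{NK}$; an identical reduction applies to $\gamma^{-1}(z)-z$.

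The central observation is that both $H$ and $K$ are torsion-free abelian groups of finite rank (each is a stationary limit of $\Z^k\to \Z^k$ for some finite $k$), so the quotients $H/NH$ and $K/NK$ are \emph{finite} abelian groups. Indeed, for any finitely generated subgroup $F$ of a finite-rank torsion-free group $G$, $F/NF$ has cardinality at most $N^{\rank G}$, and $G/NG$ is the directed union of these uniformly bounded groups, hence itself finite. The automorphism $\alpha$ preserves $NH$ and so induces an automorphism of the finite group $H/NH$; let $c_1$ be the order of that automorphism, so that $\alpha^{c_1}$ acts as the identity on $H/NH$. Choose $c_2$ analogously for $\beta$ acting on $K/NK$.

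Given these choices, $\alpha^{\pm ac_1}$ and $\beta^{\pm bc_2}$ all act trivially on $H/NH$ and $K/NK$ respectively for every $a,b\in\N$, so $\alpha^{\pm ac_1}(h_0)-h_0\in NH$ and $\beta^{\pm bc_2}(k_0)-k_0\in NK$. Dividing through by $N$ yields both $\gamma(z)-z\in H+K$ and $\gamma^{-1}(z)-z\in H+K$, as required. The only step that is not purely a direct computation is the finiteness of $H/NH$ and $K/NK$, where the finite-rank hypothesis on $H$ and $K$ enters essentially; everything else is bookkeeping.
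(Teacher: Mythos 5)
Your proof is correct, but it takes a genuinely different route from the paper's. The paper writes $z=\frac{1}{m}(h+k)$, expresses $k$ in terms of the generators $k_1,\ldots,k_r$ of $\langle\beta^{c_0}(F_2)\rangle$ with transition matrix $B$, and then invokes Corollary \ref{COR:matrix-power-difference} to find exponents $c_2'>c_2''$ with $B^{c_2''}-B^{c_2'}\equiv 0 \pmod m$, concluding that $\beta^{c_2}(k)-k\in mK$ for $c_2=c_2'-c_2''$ (and its multiples). That detour through transition matrices is forced because $B$ records $\beta$ only on a finitely generated subgroup and is generally not invertible over $\Z$, so one needs the eventual periodicity of the powers of an arbitrary integer matrix mod $m$ — which is exactly what Proposition \ref{PROP:polynomials} and the symmetric-polynomial lemma supply. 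You sidestep all of this by working with $\beta$ as an automorphism of the whole group $K$: since $K$ has finite rank, $K/NK$ is finite (your directed-union argument for this is sound — the images of $F/NF$ in $K/NK$ form an increasing chain of subgroups of order at most $N^{\rank K}$, which must stabilize), $\beta$ induces an automorphism of this finite group, and you take $c_2$ to be its order; likewise for $\alpha$ on $H/NH$. Your reduction of the membership $\gamma(z)-z\in H+K$ to the two congruences modulo $NH$ and $NK$ is valid because the sum $H+K$ is direct and everything in sight is torsion-free. Your argument is shorter, handles $\gamma$ and $\gamma^{-1}$ symmetrically for free, and still delivers the feature needed downstream (that every positive multiple of $c_1$ and of $c_2$ works, independently of one another). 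What it does not deliver is Corollary \ref{COR:matrix-power-difference} itself, which the paper advertises as a result of independent interest and which genuinely requires the non-invertible case; your method only recovers the special case where the matrix is invertible mod $m$.
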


\begin{proof}
There exist $m\in\Z^+$, $h\in H$, and $k\in K$ such that $z = \frac{1}{m}(h+k)$.  
Then
\begin{align}\label{EQ:gamma-z}
\gamma(z) - z & = \frac{1}{m} (\alpha^{c_1}(h)-h) + \frac{1}{m}(\beta^{c_2}(k)-k).  
\end{align}
Let us show that, with an appropriate choice of $c_2$, the second summand is an element of $K$.  
Because $(\beta,F_2)$ satisfy the increasing subgroup condition for $K$, $k\in \langle \beta^{c_0}(F_2)\rangle$ for some $c_0\in\N$.  
Let $k_1,\ldots, k_r$ denote the elements of $\beta^{c_0}(F_2)$, and let $B$ be a transition matrix for $\beta$, so that
\begin{align}\label{EQ:transition-matrix}
k_i & = \sum_{j=1}^r (B)_{ji}\beta(k_j).  
\end{align}

By Corollary \ref{COR:matrix-power-difference}, above, there exist integers $c_2'>c_2''\geq 0$ such that $B^{c_2''}-B^{c_2'}$ has every entry divisible by $m$.  
Let $c_2 = c_2'-c_2''$ and let $C\in \Z^r$ be a column of integers representing $k$ as a combination of $\{ k_1,\ldots, k_r\}$.  
$B^{c_2'}C$ represents $k$ as a combination of $\{ \beta^{c_2'}(k_1),\ldots,\beta^{c_2'}(k_r)\}$, and $B^{c_2''}C$ represents $\beta^{c_2}(k)$ as a combination of the same elements.  
Thus $\beta^{c_2}(k)-k\in m\langle \beta^{c_2'}(\{ k_1,\ldots, k_r\})\rangle$, and hence $\frac{1}{m}(\beta^{c_2}(k)-k)\in K$.  
Indeed, this is true if $c_2$ is any positive integer multiple of $c_2'-c_2''$.  

We can repeat the same argument to find $c_1$ that works for with $h$ and $\alpha$.  
Then a very similar argument, with the same choices of $c_1$ and $c_2$, works for $\gamma^{-1}(z)-z$.  
\end{proof}

Let $l_2$ denote the least common multiple of the $c_2$ values obtained in Lemma \ref{LEM:automorphism-extends} as $z$ ranges over $Z$, and let $l_1$ denote the least common multiple of the $c_1$ values.  
Then $\gamma : (H+K)\otimes \Q \to (H+K)\otimes \Q$ defined by $\gamma(\frac{1}{m} (h+k)) = \frac{1}{m} (\alpha^{l_1}(h) + \beta^{l_2}(k))$ restricts to an order automorphism of $G$.  
We could replace $l_1$ with any multiple of $l_1$ and $l_2$ with any multiple of $l_2$ and this would still be true.  

As $(\beta,F_2)$ satisfies the increasing subgroup condition, it is clear that $(\beta^{l_2},F_2)$ does as well; likewise $(\alpha^{l_1},F_1)$ satisfies the increasing monoid condition.  
Moreover, by replacing $F_2$ with $\beta^{il_2}(F_2)$ if necessary, we may suppose without loss of generality that, if $z = \frac{1}{m}(h+k)\in Z$, then $k\in \langle F_2\rangle$; likewise we may suppose that $h\in \langle F_1\rangle$.  
But even more, we may suppose without loss of generality that $h\in\Mon(F_1)$, i.e., $z\in G^+$.  
Indeed, if $z\in (-G^+)$, then we may replace $z$ with $-z$, while if $z\notin (-G^+)\cup G^+$, then $mz\in K$, and we may remove $z$ from the list $Z$ of braces and replace $K$ with the stationary group $\langle K\cup \{ z\} \rangle$ in the statement of the theorem.  

Now let us construct the set $F$.  
Let us fix notation for the elements of $F_1$ and $F_2$: $F_1 = \{ h_1,\ldots,h_n\}$ and $F_2 = \{ k_1,\ldots, k_r\}$.  
Choose $h_0\in F_1$ and define positive elements $g_i := h_0 + k_i$ for $1\leq i\leq r$, and $g_{r+1} := h_0-k_1-\cdots - k_r$.  
Then define $F$ by 
\begin{equation}
F := F_1\cup Z \cup \{ g_1,\ldots, g_r\}.
\end{equation}

The goal is to show that $(\gamma,F)$ satisfies the increasing monoid condition, where $\gamma$ is constructed as above, possibly after replacing $l_1$ and $l_2$ with multiples.  
As a first step let us show that, with the right choices of $l_1$ and $l_2$, $(\gamma|_{H+K},F\setminus Z)$ satisfies the increasing monoid condition for the order subgroup $H+K\subset G$.  

\begin{lemma}\label{LEM:increasing-monoid-h-plus-k}
If $l_1$ and $l_2$ are chosen appropriately, then $(\gamma|_{H+K},F\setminus Z)$ satisfies the increasing monoid condition for the order subgroup $H+K\subset G$.  
Specifically, given $g\in (H+K)^+$, there exists $p_0>0$ such that, if $p\geq p_0$, then $g\in \Mon(\gamma|_{H+K}^p(F\setminus Z))$.  
\end{lemma}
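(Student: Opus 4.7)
The plan is first to reduce to the nontrivial case $g \neq 0$ and write $g = h + k$ with $h \in H$, $k \in K$. Since $\tau$ vanishes on $K$, the hypothesis $g \in (H+K)^+$ forces $\tau(h) = \tau(g) > 0$, so $h \in H^+$. By the increasing subgroup condition on $K$ for $(\beta, F_2)$, there is $p_1$ such that for all $p \geq p_1$ one may write $k = \sum_{i=1}^r a_i^{(p)} \beta^{pl_2}(k_i)$ for some integers $a_i^{(p)}$.

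To turn the potentially negative $a_i^{(p)}$ into non-negative monoid coefficients, I would invoke the element $g_{r+1}$, which must be added to $F$ for the lemma to hold (the displayed definition $F := F_1 \cup Z \cup \{g_1, \ldots, g_r\}$ appears to drop it inadvertently). With $M := \max(0, -\min_i a_i^{(p)})$, $b_i := a_i^{(p)} + M \geq 0$, $b_0 := M \geq 0$, the identities $\gamma^p(g_i) = \alpha^{pl_1}(h_0) + \beta^{pl_2}(k_i)$ and $\gamma^p(g_{r+1}) = \alpha^{pl_1}(h_0) - \sum_j \beta^{pl_2}(k_j)$ give
\[
\sum_{i=1}^r b_i\, \gamma^p(g_i) + b_0\, \gamma^p(g_{r+1}) = N^{(p)} \alpha^{pl_1}(h_0) + k,
\]
where $N^{(p)} := \sum_i a_i^{(p)} + (r+1)M \geq 0$. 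Thus $g = (h - N^{(p)} \alpha^{pl_1}(h_0)) + \sum_i b_i \gamma^p(g_i) + b_0 \gamma^p(g_{r+1})$, and the problem reduces to showing that $h - N^{(p)} \alpha^{pl_1}(h_0)$ lies in $\Mon(\alpha^{pl_1}(F_1))$ for all $p$ large enough.

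The hard step is to control $N^{(p)}$ against the natural coefficient of $\alpha^{pl_1}(h_0)$ that appears when $h$ is pushed forward inside the monoids $\Mon(\alpha^{pl_1}(F_1))$. Fix $p_0$ via the increasing monoid condition on $H$ so that $h = \sum_j c_j \alpha^{p_0 l_1}(h_j)$ with $c_j \geq 0$; for $p \geq p_0$ one pushes this forward using the transition matrix $A_1$ for $\alpha^{l_1}$ on $F_1$, which may be taken primitive after replacing $l_1$ by a large enough multiple (since the stationary simple group $H$ has primitive transition matrix by Proposition \ref{PROP:characterization}). The Perron--Frobenius theorem then makes the coefficient of $\alpha^{pl_1}(h_0)$ in this new expansion grow like a positive constant times $\lambda^{(p - p_0) l_1}$, where $\lambda > 1$ is the weak Perron--Frobenius eigenvalue of $\alpha^{-1}$ supplied by Lemma \ref{LEM:PF}. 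On the other hand, $N^{(p)}$ is controlled by $\|B^{-pl_2}\|$ applied to the fixed coordinate vector of $k$ in the basis $F_2$, where $B$ is the matrix of $\beta$; this gives $N^{(p)} = O(\rho^{pl_2})$ for $\rho$ the spectral radius of $\beta^{-1}$. Because the preamble to the lemma allows us to replace $l_1$ by any positive multiple without breaking Lemma \ref{LEM:automorphism-extends}, simply enlarge it until $l_1 \log \lambda > l_2 \log \rho$. Then the Perron--Frobenius coefficient eventually dominates $N^{(p)}$, the remaining coefficients of $h$'s expansion stay non-negative, and $h - N^{(p)} \alpha^{pl_1}(h_0) \in \Mon(\alpha^{pl_1}(F_1))$ for all sufficiently large $p$, which is what we need.
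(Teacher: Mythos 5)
Your decomposition is essentially the paper's: write $g=h+k$ with $h\in H^+$, express $k$ in terms of $\beta^{pl_2}(F_2)$, use $g_1,\ldots,g_r$ together with $g_{r+1}$ to absorb the possibly negative coefficients at the cost of subtracting a multiple $N^{(p)}$ of $\alpha^{pl_1}(h_0)$ from the $H$-part, and then win by making $l_1\log\lambda$ exceed $l_2$ times the logarithm of the growth rate of the coefficients of $k$ (the paper uses the cruder bound $(rL)^{pl_2}$ where yours uses the spectral radius; also your ``$\|B^{-pl_2}\|$'' should be $\|B^{pl_2}\|$, since the transition matrix $B$ appears with positive powers when pushing coordinates forward). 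You are also right that $g_{r+1}$ must belong to $F$: the displayed definition omits it, but the paper's own proof uses it exactly as you do. Where you genuinely diverge is the last step. The paper exploits the total order on $H\subset\R$: the residual element $\lambda^{pl_1}h-N^{(p)}h_0$ is a real number, it is positive once Inequality \ref{EQ:l-ratio} holds and $p$ is large, and then the increasing monoid condition for $(\alpha^{l_1},F_1)$ places it in $\Mon(\alpha^{tl_1}(F_1))$ for some later stage $t$; this forces the paper to first prove separately that the $g_i$ themselves satisfy the increasing property, so that the two stages can be reconciled. Your route keeps everything at the single stage $p$ by pushing a fixed non-negative expansion of $h$ forward through the transition matrix of $(\alpha^{l_1},F_1)$ and comparing the Perron--Frobenius growth of the $h_0$-coefficient with $N^{(p)}$, which is tidier in that respect.

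The one thin spot is the primitivity claim that your route requires. A non-negative transition matrix for $(\alpha^{l_1},F_1)$ need not become primitive just by ``replacing $l_1$ by a large enough multiple'': powers of a reducible non-negative matrix stay reducible, and if the chosen expansion of $h$ happens to avoid the irreducible block containing $h_0$, the $h_0$-coefficient need not grow like $\lambda^{pl_1}$ at all. What is true is that the \emph{if} direction of Proposition \ref{PROP:characterization}, applied to $H$, produces a primitive transition matrix --- but only after replacing the transition matrix itself by the constructed $A''$ (or a power of it), passing to a power $\alpha^{ml_1}$, and shifting $F_1$ to $\alpha^{nl_1}(F_1)$; you need to say that this is the matrix you push forward with, not an arbitrary transition matrix for a power of $\alpha^{l_1}$. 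With that repair your argument closes; alternatively, you can sidestep primitivity entirely by checking positivity of the residual real number $\lambda^{pl_1}h-N^{(p)}h_0$ directly and then invoking the increasing monoid condition on $H$ at a later stage, as the paper does, at the price of first establishing the increasing property for the $g_i$.
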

\begin{proof}
The automorphism $\alpha$ of $H$ is given by division by some $\lambda > 1$, so that, for $h\in H$, $\gamma(h) = \frac{1}{\lambda^{l_1}}h$.  

Pick non-zero $g\in (H+K)^+$; then $g$ has the form 
\begin{align*}
g & = h + \beta^{q'}(c_1k_1+\cdots + c_rk_r) 
\end{align*}
with $q'\geq 0$, $c_i\in\Z$, and $h\in H^+$.  This expression is not unique: pick $q$ with $ql_2>q'$; then we can also write
\begin{align*}
g & = \alpha^{ql_1}(\lambda^{ql_1}h) + \beta^{ql_2}(\beta^{q'-ql_2}(c_1k_1+\cdots + c_rk_r)),
\end{align*}
where 
\begin{align*}
\beta^{q'-ql_2}(c_1k_1+\cdots + c_rk_r) & = d_1k_1+\cdots + d_rk_r
\end{align*}
with $d_i\in\Z$ as $q'-ql_2<0$.  
In fact, the coefficients $d_i$ are determined by the transition matrix $B$ of $\beta$ from Equation \ref{EQ:transition-matrix} by the following formula.  
\begin{align}\label{EQ:coeff-bound}
d_i & = \sum_{i=1}^r (B^{ql_2-q'})_{ij}c_j.
\end{align}
Let $M_c = \max_i\{ |c_i|\}, M_d = \max_i \{ |d_i|\}$, and $L = \max_{i,j}\{|(B)_{ij}|\}$.  
Then by Equation \ref{EQ:coeff-bound} $M_d\leq (rL)^{ql_2-q'}M_c$.  
Note that $M_d$ varies with $q$, but $M_c$ does not.  

Now write $d_1k_1+\cdots + d_rk_r$ as an integer combination of $F\setminus Z$ as follows.  
\begin{align*}
d_1k_1+\cdots d_rk_r & = (M_d+d_1)g_1 + \cdots + (M_d+d_r)g_r + M_dg_{r+1} \\
& \qquad -((r+1)M_d + \sum_{i=1}^r d_i)h_0.
\end{align*}
Note that all of these coefficients are non-negative, except perhaps for that of $h_0$.  
Then 
\begin{align*}
g & = \alpha^{ql_1}(\lambda^{ql_1}h) + \beta^{ql_2}(\beta^{q'-ql_2}(c_1k_1+\cdots + c_rk_r)) \\
& = \gamma^{q}(\lambda^{ql_1}h+d_1k_1+\cdots + d_rk_r) \\
& = \gamma^{q}\big( (M_d+d_1)g_1 + \cdots + (M_d+d_r)g_r + M_dg_{r+1} \\
& \qquad + \lambda^{ql_1}h-((r+1)M_d + \sum_{i=1}^r d_i)h_0\big).
\end{align*}

This is $\gamma^q$ of an integer combination of elements of $F\setminus Z$ plus the element $h' := \lambda^{ql_1}h - ((r+1)M_d+\sum_{i=1}^r d_i)h_0\in H\subset \R$.  
This is a totally ordered group, so we can determine conditions for $h'$ to be positive.  
\begin{align}
h' & = \lambda^{ql_1}h-((r+1)M_d + \sum_{i=1}^r d_i)h_0 \notag \\
 & \geq \lambda^{ql_1}h  - (2r+1)M_dh_0 \notag \\
\label{EQ:g-coeff}& \geq \lambda^{ql_1}h  - (2r+1)(rL)^{ql_2-q'}M_ch_0.
\end{align}

If we choose $l_1$ and $l_2$ in such a way that
\begin{align}
\frac{\lambda^{l_1}}{(rL)^{l_2}} & > 1 \notag \\
\label{EQ:l-ratio} l_1 & > l_2\frac{\log rL}{\log \lambda},
\end{align}
then $h'$ will be positive for all sufficiently large $q$.  
Indeed, we can say more precisely what is meant by ``sufficiently large $q$'' by declaring the quantity on the right hand side of Inequality \ref{EQ:g-coeff} to be positive and then solving for $q$:  
\begin{align}
\lambda^{ql_1}h  - (2r+1)(rL)^{ql_2-q'}M_ch_0 & > 0 \notag \\
\lambda^{ql_1}h  & > (2r+1)(rL)^{ql_2-q'}M_ch_0 \notag \\
\Big( \frac{\lambda^{l_1}}{(rL)^{l_2}}\Big)^{q} & > \frac{(2r+1)M_ch_0}{(rL)^{q'}h} \notag \\
\label{EQ:sufficient-q}q & > \frac{\log(2r+1) + \log \frac{M_ch_0}{(rL)^{q'}h}}{l_1\log \lambda - l_2\log rL}.
\end{align}

There are two facts that should be emphasized about Inequality \ref{EQ:sufficient-q}.  
Firstly, the quantities $\lambda,r,L,l_1$, and $l_2$ are all constants, and only the quantity $\log \frac{M_ch_0}{(rL)^{q'}h}$ on the right hand side depends on the group element $g$.  

Secondly, the direction of the inequality is preserved in the last line of Inequality \ref{EQ:sufficient-q} precisely because $l_1$ and $l_2$ satisfy Inequality \ref{EQ:l-ratio}.  
Moreover, such a choice of $l_1$ and $l_2$ can be made consistent with the requirements for $\gamma$ to be an order automorphism, because by Lemma \ref{LEM:automorphism-extends} those requirements allow $l_1$ to be chosen arbitrarily large independently of $l_2$.  

Let us use Inequality \ref{EQ:sufficient-q} to find a power $q$ that suffices for all $g_i$ with $1\leq i \leq r+1$.  
For these elements, $M_c = 1$, $h = h_0$, and $q' = 0$.  
Thus if $Q > \frac{\log (2r+1)}{l_1\log \lambda - l_2 \log rL}$ then for all $1\leq i \leq r+1$ we can write $g_i = \overline{h}_i + \overline{g}_i$, with $\overline{h}_i\in H^+$ and $\overline{g}_i \in \Mon(\gamma^q(\{ g_1,\ldots, g_{r+1}\}))$ for all $q\geq Q$.  
Use the increasing monoid condition for $(\alpha^{l_1},F_1)$ with respect to $H$ to choose $P\in \N$ such that for all $1\leq i \leq r+1$, $\overline{h}_i \in \Mon(\alpha^{pl_1}(F_1))$ for all $p\geq P$.  
Then for all $1\leq i\leq r+1$, $g_i\in \Mon(\gamma^q(F\setminus Z))$ for all $q \geq \max\{ P,Q\}$.  

Returning to the generic element $g\in G^+$, we know that there exists $q\in \N$ such that we can write $g = \gamma^q(e_1g_1+\cdots + e_{r+1}g_{r+1} + h')$ with $e_i\in\Z^+$ and $h'\in H^+$.  
Let $Q$ be as above and choose $T\in\N$ such that $h'\in \Mon(\alpha^{tl_1}(F_1))$ for all $t\geq T$; then $g\in \Mon(\gamma^p(F\setminus Z))$ if $p\geq q+\max\{ Q,T\}$.  

Technically this does not show that $(\gamma|_{H+K},F\setminus Z)$ satisfies the ``increasing'' part of the increasing monoid condition; however, it does if we replace $\gamma$ with $\gamma^{\max \{ P,Q\}}$, which can be done by modifying $l_1$ and $l_2$.  
\end{proof}

Next let us show that any brace $z$ lies in $\Mon(\gamma^p(F))$ for all sufficiently large $p$.  
\begin{lemma}\label{LEM:braces-in-monoid}
If $z\in Z$ is a brace, then there exists $P\in\N$ such that $z\in\Mon(\gamma^p(F))$ for all $p\geq P$.  
\end{lemma}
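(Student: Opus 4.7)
The plan is to use the decomposition $z = \gamma^p(z) + (z - \gamma^p(z))$: since $\gamma^p(z) \in \gamma^p(Z) \subseteq \gamma^p(F) \subseteq \Mon(\gamma^p(F))$, the task reduces to showing that $B_p := z - \gamma^p(z)$ lies in $\Mon(\gamma^p(F \setminus Z))$ for all sufficiently large $p$.  Writing $z = \frac{1}{m}(h+k)$ with $h \in \Mon(F_1)$ and $k \in \langle F_2\rangle$, one obtains $B_p = \frac{1}{m}(1 - \lambda^{-pl_1})h + \frac{1}{m}(k - \beta^{pl_2}(k))$; telescoping the argument of Lemma \ref{LEM:automorphism-extends} places the two summands in $H$ and $K$ respectively, and the strict positivity $\lambda > 1$, $h > 0$ ensures that the $H$-component lies in $H^+$, so $B_p \in (H+K)^+$.

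I would then execute the construction from the proof of Lemma \ref{LEM:increasing-monoid-h-plus-k} on $B_p$ with the specific choice $q = p$.  Iterating Lemma \ref{LEM:automorphism-extends}, the $K$-component of $B_p$ admits a representation $\beta^{q'}(\sum c_i k_i)$ in which $q'$ grows linearly in $p$ with a fixed offset, and the integer coefficients are bounded by a constant multiple of $(rL)^{q'}$ obtained by iterating the transition matrix of $\beta$.  The key observation is that $M_c / (rL)^{q'}$ is then bounded independently of $p$; substituting into Inequality \ref{EQ:sufficient-q}, the right-hand side becomes $p$-independent, so $q = p$ satisfies this inequality for all $p$ beyond a fixed threshold.

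It remains to verify that the residual $h'_p := (\lambda^{pl_1}-1)h/m - ((r+1)M_d + \sum d_i)h_0 \in H$ appearing in the decomposition lies in $\Mon(\alpha^{pl_1}(F_1))$.  Setting $u := (\lambda^{l_1}-1)h/m$, which lies in some $\Mon(\alpha^{n_u}(F_1))$ for a fixed $n_u$ depending only on $z$ (by the analog of Lemma \ref{LEM:automorphism-extends} applied to $H$), the identity $(\lambda^{pl_1}-1)h/m = \sum_{i=0}^{p-1}\alpha^{-il_1}(u)$, together with the fact that the transition matrix of $\alpha^{-1}$ on $F_1$ has non-negative integer entries (a consequence of the increasing monoid condition $F_1 \subseteq \Mon(\alpha(F_1))$), keeps the first term in $\Mon(\alpha^{n_u}(F_1))$.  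Expanding $h'_p$ in the $\alpha^{n_u}(F_1)$-basis, Perron--Frobenius applied to this transition matrix shows that the first term's coefficients grow like $\lambda^{pl_1}$ while the correction's coefficients grow only like $(rL)^{pl_2}$; Inequality \ref{EQ:l-ratio} forces coordinatewise domination of the first for $p$ large, placing $h'_p \in \Mon(\alpha^{n_u}(F_1)) \subseteq \Mon(\alpha^{pl_1}(F_1))$ once $pl_1 \geq n_u$.  Combining these steps produces $B_p \in \Mon(\gamma^p(F \setminus Z))$, hence $z \in \Mon(\gamma^p(F))$, for all $p$ past a threshold depending on $z$.  The chief obstacle is synchronizing these growth rates so that a single $p$ satisfies both Inequality \ref{EQ:sufficient-q} and the monoid condition for $h'_p$; this is achieved by the cancellation rendering the first $p$-independent and by arranging Inequality \ref{EQ:l-ratio} through a sufficiently large choice of $l_1$ (as in Lemma \ref{LEM:automorphism-extends}).
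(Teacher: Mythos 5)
Your proof follows essentially the same route as the paper's: decompose $z = \gamma^{p}(z) + (z-\gamma^{p}(z))$, check the remainder lies in $(H+K)^{+}$, and then observe that the only element-dependent term $\log\frac{M_c h_0}{(rL)^{q'}h}$ in Inequality \ref{EQ:sufficient-q} is bounded uniformly in $p$ because the coefficients of the $K$-part grow at most like $(rL)^{q'}$ while the $H$-part stays bounded below. Your third paragraph, controlling the residual $h'_p$ via the telescoping identity and the growth-rate comparison forced by Inequality \ref{EQ:l-ratio}, supplies a uniformity detail the paper leaves implicit, but it is in the same spirit and does not change the approach.
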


\begin{proof}
Let $z = \frac{1}{m}(h+k)$ with $h\in H^+$ and $k\in K$, as in Lemma \ref{LEM:automorphism-extends}.  
We can write an expression for $z$ similar to Equation \ref{EQ:gamma-z} from that lemma:  
\begin{align}
\label{EQ:z-gamma-mu}z & = \gamma^q (z) + \frac{1}{m}(h-\gamma^q(h)) + \frac{1}{m} (k-\gamma^q (k)) \\
& = \gamma^q (z) + g_q \notag
\end{align}
with $g_q\in (H+K)^+$ because $\gamma^q(h)<h$.  
Let us show that there is a power $P$ such that $g_q$ is in $\Mon(\gamma^p(F\setminus Z ))$ for all $q$ and for all $p \geq P$.  
If such a $P$ exists, then $z\in \Mon (\gamma^p(F))$ for all $p > P$.  

Such a $P$ must satisfy Inequality \ref{EQ:sufficient-q} from the proof of Lemma \ref{LEM:increasing-monoid-h-plus-k} regardless of the value of the right hand side, which depends upon $g_q$, and hence $q$.  
But only the term $\log \frac{M_ch_0}{(rL)^{q'}h}$ depends upon $g_q$; if we can find an upper bound for this term, say $M$, then the minimal integer $P$ satisfying $P > \frac{\log (2r+1) + M}{l_1\log \lambda - l_2\log rL}$ will suffice.  

So let us find an upper bound for this term.  
The symbols used in it do not have the same meanings in Lemma \ref{LEM:increasing-monoid-h-plus-k} as they do here.  
Here $\frac{1-\lambda^{-q l_1}}{m}h$ plays the roles of $h$, $q' = q l_2$, and $mM_c$ is the maximum modulus of any coefficient $c_i$ of an element $k_i$, $1\leq i\leq r$, in the expansion $\beta^{-q l_2}(k) - k = c_1k_1+\cdots c_rk_r$.  
Let $M_k$ denote the maximum modulus of any coefficient $d_i$ in the expansion $k = d_1k_1+\cdots + d_rk_r$.  
Then, by arguments similar to those used in the proof of Lemma \ref{LEM:increasing-monoid-h-plus-k}, $mM_c \leq ((rL)^{q l_2}+1)M_k$.  
Thus
\begin{align*}
\log \frac{M_ch_0}{(rL)^{q l_2}\frac{1-\lambda^{-q l_1}}{m}h} & \leq \log \frac{\frac{(rL)^{q l_2}+1}{m}M_k h_0}{(rL)^{q l_2}\frac{1-\lambda^{-q l_1}}{m}h} \\
& = \log \frac{(rL)^{q l_2}+1}{(rL)^{q l_2}} \frac{1}{1-\lambda^{-q l_1}} \frac{M_kh_0}{h} \\
& \leq \log 2 \frac{1}{1-\lambda^{-1}}\frac{M_kh_0}{h}.
\end{align*}

$M_k$ and $h$ are defined in terms of $z$, and so do not depend on $q$, and neither do $\lambda$ and $h_0$, which are fixed.  
Therefore this bound is independent of $q$, and hence so is the resulting power $P$.  
Therefore $z\in\Mon(\gamma^p (F))$ for all $p \geq P$.  
\end{proof}

It follows easily from Lemma \ref{LEM:braces-in-monoid} that $\Mon(\gamma^i(F))$ is an increasing sequence of monoids, perhaps after passing to a power of $\gamma$.  
Now all that remains is to show that the union of this sequence is all of $G^+$.  
\begin{lemma}\label{LEM:monoid-union}
The union of the increasing sequence $\Mon(\gamma^i (F))$ of monoids is all of $G^+$.  
\end{lemma}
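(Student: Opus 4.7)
The plan is, given $g \in G^+$, to exhibit $p, q \in \N$ with $g \in \Mon(\gamma^{p+q}(F))$. I would achieve this by showing that for $p$ large, $\gamma^{-p}(g)$ decomposes as $g_p + \sum_{i=1}^s n_i z_i$ with $g_p \in (H+K)^+$ and each $n_i \geq 0$, so that Lemma \ref{LEM:increasing-monoid-h-plus-k} handles $g_p$ and Lemma \ref{LEM:braces-in-monoid} handles the braces, yielding $\gamma^{-p}(g) \in \Mon(\gamma^q(F))$ and hence $g \in \Mon(\gamma^{p+q}(F))$.

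The first step is to write $g = h+k+\sum_{i=1}^s n_i z_i$ with $h\in H$, $k \in K$, $n_i \in \Z$, using that $G$ is generated as an abelian group by $H\cup K\cup Z$. Since each brace satisfies $Mz_i \in H+K$ for some common $M \in \N$, one can transfer multiples of $Mz_i$ between the $(h+k)$-part and the brace part to arrange $0\leq n_i < M$, bounding the brace contribution uniformly. Iterating Lemma \ref{LEM:automorphism-extends} gives $\gamma^{-p}(z_i) = z_i + w_i^{(p)}$ with $w_i^{(p)} \in H+K$, so applying $\gamma^{-p}$ to the adjusted decomposition produces
\[
\gamma^{-p}(g) = g_p + \sum_{i=1}^s n_i z_i, \qquad g_p := \gamma^{-p}(h+k) + \sum_i n_i w_i^{(p)} \in H+K.
\]
Because $\gamma$ restricts to an order automorphism of $\R \oplus \Q^r$ that scales the first coordinate by $\lambda^{-l_1}$, we have $\tau\circ \gamma^{-p} = \lambda^{pl_1}\tau$, so that $\tau(g_p) = \lambda^{pl_1}\tau(g) - \sum n_i \tau(z_i)$. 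Since $\tau(g) > 0$ by simplicity and the $n_i$ are uniformly bounded, $\tau(g_p) > 0$ for all sufficiently large $p$, placing $g_p$ in $(H+K)^+$.

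To finish, I would apply Lemma \ref{LEM:braces-in-monoid} to obtain a single $P_0$ with $z_i \in \Mon(\gamma^q(F))$ for every $q \geq P_0$ and every $i$, and Lemma \ref{LEM:increasing-monoid-h-plus-k} to obtain $Q_0$ with $g_p \in \Mon(\gamma^q(F\setminus Z))$ for $q\geq Q_0$; choosing $q \geq \max(P_0, Q_0)$ and using $n_i \geq 0$ places $\gamma^{-p}(g)$ in $\Mon(\gamma^q(F))$, whence $g \in \Mon(\gamma^{p+q}(F))$. The main obstacle is controlling braces whose coefficients in the decomposition of $g$ may be negative---since $-z_i$ need not lie in $G^+$---and the resolution is the reduction to $0\leq n_i < M$ via the relations $Mz_i\in H+K$, combined with the exponential growth of $\tau(g_p)$ under $\gamma^{-p}$, which eventually dominates the bounded brace contribution and makes $g_p$ accessible to Lemma \ref{LEM:increasing-monoid-h-plus-k}.
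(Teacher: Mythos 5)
Your proof is correct and follows essentially the same route as the paper's: decompose $g$ over $H$, $K$, and the braces, normalize the brace coefficients to be non-negative using $m_iz_i\in H+K$, use (iterates of) Lemma \ref{LEM:automorphism-extends} together with the strict positivity of $\tau(g)$ to show the $H+K$ remainder becomes positive under a high power of $\gamma$, and then invoke Lemmas \ref{LEM:increasing-monoid-h-plus-k} and \ref{LEM:braces-in-monoid}. The only cosmetic difference is that you pull $g$ back by $\gamma^{-p}$ and let $\tau$ grow like $\lambda^{pl_1}$ against a uniformly bounded brace contribution, whereas the paper pushes the braces forward by $\gamma^{q}$ and lets the $H$-component converge to $\tau(g)>0$.
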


\begin{proof}
Pick a non-zero $g\in G^+$; then $g$ can be written as a sum of elements of the subgroups $\langle z_1,\ldots,z_s\rangle,H$, and $K$.  
Thus
\begin{align*}
g & = a_1z_1 + \cdots + a_sz_s + h + k,
\end{align*}
with $h\in H$, $k\in K$, and $a_i\in\Z$ for all $i\leq s$.  

For $i\leq s$, we can find $m_i\in\N$, $h_i'\in H^+$, and $k_i'\in K$ such that $z_i = \frac{1}{m_i}(h_i'+k_i')$.  
Then the condition that $g\in G^+$ is equivalent to $\frac{a_1}{m_1}h_1'+\cdots + \frac{a_s}{m_s}h_s' + h > 0$ (as a real number).  
We may moreover suppose without loss of generality that each $a_i\geq 0$, because otherwise we may choose $b_i\in \N$ such that $a_i + b_im_i \geq 0$, and then $a_iz_i = (a_i+b_im_i)z_i -b_im_iz_i$, the latter summand of which lies in $H+K$, allowing us to replace $a_iz_i$ with $(a_i+b_im_i)z_i$ by modifying $h$ and $k$.  

Rewriting the expression for $g$ using Equation \ref{EQ:z-gamma-mu} yields, for arbitrary $q > 0$,  
\begin{align*}
g & = \sum_{i=1}^s \big( \gamma^q(a_iz_i) + \frac{a_i}{m_i}(h_i'-\gamma^q(h_i')) + \frac{a_i}{m_i}(k_i'-\gamma^q(k_i'))\big) + h + k,
\end{align*}
which, after rearranging, gives us
\begin{align*}
g - \sum_{i=1}^s \gamma^q (a_iz_i) & = h + \sum_{i=1}^s \frac{a_i}{m_i}(h_i'-\lambda^{-ql_1}h_i') + k + \sum_{i=1}^s \frac{a_i}{m_i}(k-\gamma^q(k_i')).  
\end{align*}
The quantity on the right hand side of this equation lies in $H+K$; moreover the summand from $H$ can be made arbitrarily close to $h+\frac{a_1}{m_1}h_1'+\cdots + \frac{a_s}{m_s}h_s'$, and hence will be positive for some sufficiently large $q$.  

Thus $g$ is a sum of elements of the form $\gamma^{q}(a_iz_i)$ and an element $g'\in (H+K)^+$.  
Lemmas \ref{LEM:increasing-monoid-h-plus-k} and \ref{LEM:braces-in-monoid} then suffice to show that $g\in \Mon (\gamma^p (F))$ for sufficiently large $p$.  
\end{proof}

This completes the proof of Theorem \ref{THM:description}.  
Let us make an observation about the result.  
\begin{remark}\label{REM:number-of-braces}
The proof of the ``if'' part of Theorem \ref{THM:description} will work for any finite number $s$ of braces; thus any group generated by $H$, $K$, and an arbitrary finite subset of $(H+K)\otimes \Q$ is stationary.  
The proof of the ``only if'' part then shows that, in fact, the number of braces need not exceed the minimum of the ranks of $H$ and $K$.  
\end{remark}

\section{Ordered cohomology groups of substitution tiling spaces}\label{SEC:tiling-cohomology}

Where are these ordered groups used?  
As mentioned in Section \ref{SEC:intro}, simple stationary dimension groups arise as invariants of subshifts of finite type.  
Let us show in this section that they also arise as the ordered top-level cohomology groups of substitution tiling spaces.  

The necessary background and definitions from the theory of tiling spaces can be found in \cite{AP}; let us concentrate here on the algebra.  
The tiling space arising from a substitution is a metric space that turns out to be homeomorphic to the inverse limit of a sequence of finite CW-complexes, called approximants, with maps between them, and in fact the CW-complexes and the maps can be taken to be the same at every stage.  
The \u{C}ech cohomology with integer coefficients of the tiling space is then isomorphic to the inductive limit of the \u{C}ech cohomology of the approximant under the homomorphism induced on cohomology by the self-map.  
The top-level cochain group has one generator for each prototile of the substitution (assuming that it forces its border---see \cite{AP}); let us denote the number of prototiles by $n$.  

An order structure can be defined on the top-level cohomology group by saying that a non-zero group element is positive if it is the pre-image of a strictly positive real number under the Ruelle--Sullivan map \cite{KP:ruelle-sullivan}.  
This definition also applies to other tiling spaces that do not arise from substitutions.  
Using \cite[Theorem 2.2]{EHS}, one can see that the ordered top-level cohomology group is a dimension group as long as the image of the Ruelle--Sullivan map is dense in $\R$, which, I suppose, must always be the case.  
However, it is not obvious that it is a stationary simple dimension group.  

This order structure has an alternative definition for substitution tiling spaces; this is the definition that was originally given in \cite{ORS:ordered-cohomology}.  
Under the assumption that the substitution is primitive---i.e., the associated $n\times n$ transition matrix $A$ has some power with strictly positive entries---the order structure is defined by declaring a non-zero element $[g,i]$ of the inductive limit group to be positive if $w'g > 0$, where $w'$ is a positive weak Perron--Frobenius left eigenvector of the integer matrix $A'$ representing the homomorphism induced on cohomology by the self-map.  
This sounds like the simple stationary dimension groups discussed in this paper and in \cite{H:irrational}, but there is one minor difference: the top-level cohomology group of the approximant is a quotient of a cochain group $C$ (on which the transition matrix $A$ acts) by a coboundary group that is invariant under $A$.  
This means that the matrix $A'$ represents the homomorphism induced by $A$ on a quotient of $C$, and so is not necessarily primitive, as $A$ is.  
Furthermore, although $C$ is free abelian, the quotient by coboundaries need not be---in particular, it might have torsion \cite{GHK:torsion}.  

Nevertheless, the torsion-free part of the ordered top-level cohomology group of the tiling space is, in many cases, and perhaps all cases, isomorphic to a simple stationary dimension group.  
Specifically, by Lemma \ref{LEM:quotient}, the matrix $A'$ is similar to a primitive matrix if its weak Perron--Frobenius eigenvalue is irrational, so in such cases the ordered top-level cohomology group is stationary and simple.  
But Remark \ref{REM:rational-quotient} gives an example that shows that the matrix induced by a primitive integer matrix on a free abelian quotient group is not necessarily primitive if the Perron--Frobenius eigenvalue is rational.  
The idea behind this example uses a result from \cite{H:rational} that says that an $n\times n$ integer matrix $A'$ is similar via an element of $SL(n,\Z)$ to a primitive matrix if and only if its left and right Perron--Frobenius eigenvectors $w$ and $v$, normalized so that both are unimodular (i.e., the greatest common divisor of the entries is $1$), satisfy $|wv|>n$.  
The matrix $A$ from Remark \ref{REM:rational-quotient} was chosen to have unimodular right Perron--Frobenius eigenvector $v = 5x_1+x_3$, where $x_1,x_3$ are part of a basis for $\Z^3$, so that the image of $v$ in the quotient $\Z^3/\langle x_3\rangle$ is no longer unimodular.  
But it might not be possible for this to happen if the quotient is taken modulo a group generated by coboundaries, so perhaps the ordered top-level cohomology group of a substitution tiling space is stationary and simple, although I do not know how to prove this.  

\begin{remark}\label{REM:range-of-tiling-invariant}
There are some questions that follow naturally from this discussion.  
The more difficult direction of Theorem \ref{THM:description} says, essentially, that any dimension group satisfying certain obvious necessary conditions can be realized as a stationary limit of some primitive integer matrix $A$ with an integer Perron--Frobenius eigenvalue.  
An arbitrary primitive integer matrix can be realized as the transition matrix of a primitive one-dimensional substitution, and by passing to a sufficiently high power of $A$, which does not change the resulting dimension group, we can even choose a substitution that forces its border and that contains every possible two-tile sequence in the resulting tilings.  
For such a substitution the approximant will be a wedge of circles, the subgroup of coboundaries will be trivial, and the matrix $A'$ induced on top-level cohomology will equal $A$.  
Thus any simple stationary dimension group can be realized as the ordered top-level cohomology group of some substitution tiling space.  
\end{remark}

There is also the question of torsion subgroups, which must necessarily be finitely generated.  
\begin{question}
Which torsion groups appear as summands of the top-level cohomology groups of tiling spaces?  
Does the presence of a given torsion subgroup impose any restrictions on the dimension group part?  
\end{question}

\section{Worked example}\label{SEC:example}

Let $B = \left( \begin{array}{cc} 1 & 1 \\ 1 & 4 \end{array} \right)$ and let $K$ be the stationary unordered abelian group  $\bigcup_{n=0}^\infty B^{-n}(\Z^2)\subset \Q^2$.  
Then let $H = \Z [1/5]$, which is clearly stationary as an ordered abelian group, and let $G \subset (H\oplus K)\otimes \Q \subset \Q^3$ be the group generated by $H\oplus 0$, $0\oplus K$, and $z = (1/2,1/2,1/2)$ and ordered by the first coordinate.  
Let us find a primitive integer matrix that realizes the stationary property for $G$.  

Note that, following Corollary \ref{COR:matrix-power-difference}, $B^3-I$ has all entries divisible by $2$, so an automorphism of $G$ should have the form $\alpha(h,k) = (h/5^n,B^{-3}k)$ (where $B^{-3} = \frac{1}{27} \left( \begin{array}{rr} 73 & -22 \\ -22 & 7 \end{array}\right)$).  
The power $n$ needs to be chosen large enough---certainly $5^n$ must exceed the maximum eigenvalue of $B^3$---and in this case $n = 3$ will suffice.  

Let us find a finite subset of $G^+$ that satisfies the increasing monoid condition with some power of $\alpha$.  
The following elements will suffice.  
\begin{align*}
x_1 & = (1, 0, 0) & \alpha(x_1) & = (1/125,0,0) \\
x_2 & = (1, 1, 0) & \alpha(x_2) & = (1/125,73/27,-22/27) \\
x_3 & = (1,-1, 0) & \alpha(x_3) & = (1/125,-73/27,22/27) \\
x_4 & = (1, 0, 1) & \alpha(x_4) & = (1/125,-22/27, 7/27) \\
x_5 & = (1, 0,-1) & \alpha(x_5) & = (1/125, 22/27,-7/27) \\
x_6 & = (1/2, 1/2,1/2) & \alpha(x_6) & = (1/250, 51/54,-15/54).
\end{align*}

By inspection we can find a preliminary transition matrix $A$ that expresses $x_i$ as an integer combination of $\alpha(x_j)$, as in the proof of Proposition \ref{PROP:characterization}.  
$\ker \phi$, also from the proof of that proposition, is spanned by the three elements given below.  
\[
A = \left[ \begin{array}{rrrrrr} %
125 & 96 & 96 & 30 & 30 & 1 \\
 0 &  7 &  0 & 22 &  0 & 14 \\
 0 &  0 &  7 &  0 & 22 &  0 \\
 0 & 22 &  0 & 73 &  0 & 47 \\
 0 &  0 & 22 &  0 & 73 &  0 \\
 0 &  0 &  0 &  0 &  0 &  1 %
\end{array}\right], 
\ker \phi   = \left\langle \left[ \begin{array}{r} 2 \\ -1 \\ -1 \\ 0 \\ 0 \\ 0 \end{array}\right],
 \left[ \begin{array}{r} 2 \\  0 \\  0 \\ -1 \\ -1 \\ 0 \end{array}\right],
 \left[ \begin{array}{r} -3 \\ 0 \\ 1 \\ 0 \\ 1 \\ 2 \end{array}\right] \right\rangle.
\]

$\ker\phi$ is not contained in $\ker A$, so let us replace $A$ with a different transition matrix that does satisfy this condition.  
This set of generators for $\ker\phi$ can be extended to a basis of $\Z^6$ by adding the elements $(1,0,0,0,0,0)^t$, $(0,0,0,1,0,0)^t$, and $(0,0,0,0,0,1)^t$; let $L$ denote the $6\times 6$ matrix, the columns of which are the entries of this basis.  
Let $D_3$ denote the diagonal matrix, the first three diagonal entries of which are $1$s and the last three of which are $0$s, as in the proof of Proposition \ref{PROP:characterization}.  
Then 
\[
A' := A-LD_3L^{-1}A = %
 \left[ \begin{array}{rrrrrr} %
125 & 103 & 147 & 52 & 198 & 15 \\
 0 &  0 &  0 &  0 &  0 &  0 \\
 0 &  0 &  0 &  0 &  0 &  0 \\
 0 & 15 &-15 & 51 &-51 & 33 \\
 0 &  0 &  0 &  0 &  0 &  0 \\
 0 & 14 &-14 & 44 &-44 & 29 %
\end{array}\right], 
\]
which has $\ker\phi$ in its kernel, but is certainly not primitive, and has weak right Perron--Frobenius eigenvector $(1,0,0,0,0,0)^t$.  

The rows of $A'$ generate the subspace $(\ker\phi)^\perp$, although only one of these rows has non-zero product with $(1,0,0,0,0,0)^t$, and that product is $125$, which is somewhat large.  
We can combine the rows and divide by $5$ to obtain the element $(25, 32, 18, 47, 3, 27)\in (\ker\phi)^\perp$.  

Let us now replace $A'$ with a primitive matrix $A''$ of the form
\[
(A')^m + %
\left( %
 a_1\left[ \begin{array}{r} 2 \\ -1 \\ -1 \\ 0 \\ 0 \\ 0 \end{array}\right] + 
 a_2\left[ \begin{array}{r} 2 \\  0 \\  0 \\ -1 \\ -1 \\ 0 \end{array}\right] + 
 a_3\left[ \begin{array}{r} -3 \\ 0 \\ 1 \\ 0 \\ 1 \\ 2 \end{array}\right] %
\right)(25,32,18,47,3,27).
\]

The coefficients $a_i\in\Z$ should have the property that, with $s_i = 25a_i/125^m$, 
\[
    \left[ \begin{array}{r} 1 \\  0 \\  0 \\ 0 \\ 0 \\ 0 \end{array}\right] + 
 s_1\left[ \begin{array}{r} 2 \\ -1 \\ -1 \\ 0 \\ 0 \\ 0 \end{array}\right] + 
 s_2\left[ \begin{array}{r} 2 \\  0 \\  0 \\ -1 \\ -1 \\ 0 \end{array}\right] + 
 s_3\left[ \begin{array}{r} -3 \\ 0 \\ 1 \\ 0 \\ 1 \\ 2 \end{array}\right] %
\]
has strictly positive entries.  
(Here the factor of $25$ in the numerator of $s_i$ comes from the fact that $(25,32,18,47,3,27)(1,0,0,0,0,0)^t = 25$.)  

Inspection reveals that $m$ must be at least $2$.  
Then there is a lot of leeway in how the $a_i$s are chosen: $a_1=a_2 = -60$ and $a_3 = 60$ will work.  
This results in the following matrix for $A''$.  
\[
A'' = %
 \left[ \begin{array}{rrrrrr} %
5125 &  425 & 9825 & -9928 & 20178 & -3071 \\
1500 & 1920 & 1080 &  2820 &   180 &  1620 \\
3000 & 3840 & 2160 &  5640 &   360 &  3240 \\
1500 & 3147 & -147 &  6873 & -3873 &  4260 \\
3000 & 3840 & 2160 &  5640 &   360 &  3240 \\
3000 & 4906 & 1094 &  9160 & -3160 &  5533 %
\end{array}\right]. 
\]

This is indeed primitive (the third power has strictly positive entries).  
Right eigenvectors of this matrix are $(41,12,24,12,24,24)^t$, $(2,-1,-1,0,0,0)^t$, $(2,0,0,-1,-1,0)$, $(-3,0,1,0,1,2)^t$, and 
\[
\left[ \begin{array}{r} -11227\pm 3071\sqrt{13}\\ 1600\mp 440\sqrt{13}\\ 3200\mp 880\sqrt{13}\\ 1609\mp431\sqrt{13}\\ 3200\mp 880\sqrt{13}\\ 3236\mp 880\sqrt{13}\end{array}\right].  
\]

These last two vectors span the same subspace as the integer vectors $y_1 = (-10220,1460,2920,1451,2920,2938)^t$ and $y_2 = (3071,-440,$ $-880,$ $-431,-880,-880)^t$.  
Let $\mathcal{B}$ denote the set consisting of these first four eigenvectors and these last two vectors; then $\mathcal{B}$ spans a sublattice of $\Z^6$ of determinant $2\cdot 3^4\cdot 5^3$.  
Let $C$ denote the integer matrix, the columns of which are the elements of $\mathcal{B}$.  
Then the entries of $C^{-1}$ are the coefficients of the standard basis vectors of $\Z^6$ when expressed as rational combinations of $\mathcal{B}$.  
With the exception of $(0,0,0,0,0,1)^t$, all of these standard basis elements can be represented as combinations in which the coefficient of $(41,12,24,12,24,24)^t$, the $15625$-eigenvector of $A''$, has coefficient $1/125$, and the vectors $y_1$ and $y_2$ have coefficients that are integer multiples of $1/9$.  
The coefficients of the $0$-eigenvectors of $A''$ do not matter.  

In the expansion of $(0,0,0,0,0,1)^t$, the $15625$-eigenvector has a coefficient of $1/250$, and $y_1$ and $y_2$ both have a coefficient of $1/18$.  
Moreover, $A''$ acts as $B^6$ on the subspace generated by $y_1$ and $y_2$; that is $A''y_1 = 533y_1+1760y_2$ and $A''y_2 = 1760y_1+5813y_2$.  
This is sufficient to show that $\varinjlim A'':\Z^6\to \Z^6$ is order isomorphic to $G$.  

\bibliographystyle{abbrv}
\bibliography{stationary}

\end{document}